\newcommand{\ds}{\displaystyle}
\newcommand{\R}{{\mathbb{R}}}
\newcommand{\eps}{\varepsilon}
\renewcommand{\P}{{\mathcal{P}}}
\newcommand{\x}{{\boldsymbol{x}}}
\renewcommand{\c}{{\boldsymbol{o}}}
\renewcommand{\u}{{\boldsymbol{u}}}
\renewcommand{\v}{{\boldsymbol{v}}}
\newcommand{\y}{{\boldsymbol{y}}}
\newcommand{\z}{{\boldsymbol{z}}}
\renewcommand{\t}{{\boldsymbol{t}}}
\newcommand{\wt}{\widetilde}
\newcommand{\diam}{{\rm diam}}
\newcommand{\meas}{{\rm meas}}
\newcommand{\intr}{{\rm int}}
\renewcommand{\phi}{{\boldsymbol{\varphi}}}
\newcommand{\bpsi}{{\boldsymbol{\psi}}}
\newcommand{\qtq}[1]{{\quad\text{#1}\quad}}
\newtheorem{theorem}{Theorem}[section]
\newtheorem{lemma}[theorem]{Lemma}
\newtheorem{corollary}[theorem]{Corollary}
\theoremstyle{remark}
\newtheorem{remark}[theorem]{Remark}
\numberwithin{equation}{section}
\title{Christoffel function on planar domains with piecewise smooth boundary}
\author{A.\ Prymak}
\address{Department of Mathematics, University of Manitoba, Winnipeg, MB, R3T2N2, Canada\\ e-mail:prymak@gmail.com}
\thanks{The first author was supported by NSERC of Canada Discovery Grant RGPIN 04863-15.}
\author{O.\ Usoltseva}
\address{Department of Mathematics, University of Manitoba, Winnipeg, MB, R3T2N2, Canada\\
e-mail:usoltseo@myumanitoba.ca}
\thanks{The second author was supported by the University of Manitoba Graduate Fellowship and by the Department of Mathematics of the University of Manitoba.}
\keywords{Christoffel function, algebraic polynomials, orthogonal polynomials, boundary effect}
\subjclass[2010]{42C05, 41A17, 41A63, 26D05, 42B99}
\begin{document}

\begin{abstract}
	We compute up to a constant factor the Christoffel function on planar domains with boundary consisting of finitely many $C^2$ curves such that each corner point of the boundary has interior angle strictly between $0$ and $\pi$. The resulting formula uses the distances from the point of interest to the curves or certain parts of the curves defining the boundary of the domain.
\end{abstract}

\maketitle

\section{Introduction and main result}

Christoffel function associated with a compact set $D\subset\R^2$ having non-empty interior and with a positive weight function $w\in L_1(D)$ can be defined as
\begin{equation}\label{eqn:classic def}
\lambda_n(D,w,\x)=\left(\sum_{k=1}^Np_k(\x)^2\right)^{-1}, \quad \x\in D,
\end{equation}
where $\{p_k\}_{k=1}^N$ is any orthonormal basis of $\P_{n}$ with respect to the inner product $\langle f,g \rangle = \int_D f(\y)g(\y)w(\y)d\y$, and $\P_{n}$ denotes the space of all real algebraic polynomials of total degree $\le n$ in two variables (so $N=\tfrac{(n+1)(n+2)}{2}$). Christoffel function possesses the following well-known extremal property:
\begin{equation}\label{eqn:def_lambda}
\lambda_n(D,w,\x)=\min_{f\in\P_n,\, f(\x)=1}\int_D f^2(\y)w(\y) d\y, \quad \x\in D.
\end{equation}
For simplicity, for the uniform weight $w\equiv 1$ we set $\lambda_n(D,\x)=\lambda_n(D,w,\x)$.

Christoffel function is a valuable tool in various areas of analysis, mathematics and other disciplines, see, e.g.~\cite{Lu}, \cite{Ne}, \cite{Pa} and~\cite{Si}. A typical result on \emph{asymptotics} of Christoffel function establishes that for any fixed point $\x$ in the interior of $D$ one has $\lim_{n\to\infty} n^2 \lambda_n(D,w,\x)=\Psi(\x)$ where $\Psi(\x)$ is computed explicitly or estimated, and of particular interest is the decay of $\Psi(\x)$ when $\x$ is close to the boundary of $D$. Our focus is on establishing \emph{behavior} of Christoffel function, namely, for any $n$ and any $\x\in D$ we compute $\lambda_n(D,\x)$ up to a constant factor independent of $n$ and $\x$. This implies estimates of $\Psi(\x)$ and is useful in certain applications where the results on asymptotics are not helpful. For example, the rate of growth of $\inf_{\x\in D}(\lambda_n(D,\x))^{-1}$ as $n\to\infty$ was shown in~\cite{Di-Pr} to be determining for Nikol'skii type inequalities on $D$ and in~\cite{Co-Da-Le} to be crucial for stability and accuracy of discrete least squares approximation (see also~\cite{Co-Gi} for the weighted analog employing pointwise behavior of $\lambda_n(D,\x)$).

A common approach to the computation of Christoffel function is to use~\eqref{eqn:classic def} if an orthonormal basis of $\P_{n}$ is available, see, e.g.~\cite{Xu}. This becomes infeasible when $D$ is a rather general multivariate domain and the structure of the orthogonal polynomials on $D$ is unknown. 
A different approach originated in~\cite{Kr} and is based on using~\eqref{eqn:def_lambda} and comparison with other domains for which the behavior of Christoffel function is known, see also~\cite{Di-Pr}, \cite{Pr} and~\cite{Pr-U1}. In this article we further develop this approach and compute the behavior of Christoffel function for a large class of planar domains with piecewise $C^2$ boundary.

Let us define the domains we deal with more precisely. 

A $C^2$ curve is a parametric curve given by a parametrization $\phi$ which is a $C^2$ mapping from $[0,1]$ to $\R^2$ satisfying $|\phi'(s)|\ne0$ everywhere and $\phi(s)\ne\phi(t)$ when $0\le s<t<1$. In particular, we allow closed curves when $\phi(0)=\phi(1)$. By $\partial D$ we denote the boundary of a domain $D\subset \R^2$. We call $D\subset \R^2$ a domain with piecewise $C^2$ boundary if $\partial D=\cup_{i=1}^m\Gamma_i$, where each $\Gamma_i$ is a $C^2$ curve. A point $\v$ is a corner point of $\partial D$ if $v=\phi(s)=\bpsi(t)$ where $\phi$ and $\bpsi$ are parametrizations of $\Gamma_i$ and $\Gamma_\ell$ with $\phi'(s)\ne\bpsi'(t)$. Note that this can happen even when $i=\ell$, $\phi=\bpsi$ say for $t=0$ and $s=1$. However, we do require that if $\Gamma_i$ with parametrization $\phi:[0,1]\to\R^2$ is closed and $\phi(0)$ is not a corner point (i.e. $\phi'(0)=\phi'(1)$), then necessarily $\phi''(0)=\phi''(1)$, so that $\Gamma_i$ is $C^2$ everywhere. 

Let $\{\v_j\}_{j=1}^k$ be the set of all corner points of $\partial D$. For every $j$, we need to define the interior angle $\alpha_j$ of $D$ at $\v_j$, and two related ``corner'' boundary curves $\Gamma_{j}^{\pm}$. Let $\phi:[-\eps,\eps]\to \R^2$, $\eps>0$, be the natural parametrization of $\partial D$ in a neighborhood of $\v_j$ such that $\phi(0)=\v_j$, $D$ remains on the left when $\partial D$ is traversed as the parameter increases and $\Gamma_j^-:=\phi|_{[-\eps,0]}$ and $\Gamma_j^+:=\phi|_{[0,\eps]}$ are $C^2$ curves with no common points except for $\v_j$ (it might be that this corner point is formed at the endpoints of a single $\Gamma_i$). These conditions can be achieved by considering proper orientation and taking sufficiently small $\eps>0$. Then the unit vectors $\t_j^+:=\lim_{s\to0^+}\phi'(s)$ and $\t_j^-:=\lim_{s\to0^-}\phi'(s)$ are tangent to $\partial D$ at $\v_j$. We define $\alpha_j\in[0,2\pi)$ as the angle required to turn $\t_j^+$ counterclockwise to get $-\t_j^-$. If $D$ is a simple polygon, this definition coincides with the standard definition of the interior angle. We remark that in the above we allow piecewise $C^2$ domains to have several connected components and to have holes.

We set $d(\x,\Gamma):=\inf_{\y\in\Gamma}|\x-\y|$ to be the distance from a point $\x$ to a set (or curve) $\Gamma$, where $|\cdot|$ is the Euclidean norm in $\R^2$, and $d(\x,\emptyset):=\infty$. Define $\rho_n^*(t):=n^{-2}+n^{-1}\sqrt{t}$, $t\ge 0$. The equivalence ``$\approx$'' is understood with absolute constants, namely, $A\approx B$ means $c^{-1}A\le B\le cA$ for an absolute constant $c>0$.

Our main result is the following theorem.
\begin{theorem}\label{thm:main}
Let $D\subset\R^2$ be a domain with piecewise $C^2$ boundary with pieces of the boundary $\Gamma_i$, $i=1,\dots,m$, and corner points $\v_j$ with interior angles $\alpha_j$, $0<\alpha_j<\pi$, and related corner boundary curves $\Gamma_j^\pm$ as defined above, $j=1,\dots,k$. For any point $\x\in D$
\begin{equation}\label{eqn:main}
	\lambda_n(D,\x)\approx c(D) \min\Big(\min_{1\le i\le m} n^{-1}\rho_n^*(d(\x,\Gamma_i)) , \min_{1\le j\le k} \rho_n^*(d(\x,\Gamma_j^-)) \rho_n^*(d(\x,\Gamma_j^+)) \Big),
\end{equation} 
where $c(D)>0$ is a constant depending only on $D$.
\end{theorem}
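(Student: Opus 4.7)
The plan is to establish the two sides of the equivalence~\eqref{eqn:main} by comparing $\lambda_n(D,\x)$ with Christoffel functions on canonical model domains whose behavior is already known: disks near smooth arcs (where $\lambda_n(B,\x)\approx \diam(B)\,n^{-1}\rho_n^*(d(\x,\partial B))$) and triangles near corners with interior angle in $(0,\pi)$ (where $\lambda_n(T,\x)\approx \rho_n^*(d(\x,\ell_1))\rho_n^*(d(\x,\ell_2))$ for $\x$ near the vertex with adjacent sides $\ell_1,\ell_2$, as in the polygonal case from~\cite{Pr-U1}). To obtain~\eqref{eqn:main} I will prove, feature by feature, the upper estimate $\lambda_n(D,\x)\lesssim F$-term of~\eqref{eqn:main} for every boundary feature $F$ (smooth arc or corner), and the matching lower estimate $\lambda_n(D,\x)\gtrsim \min_F F$-term by using a model domain tailored to the feature that realizes the minimum.

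For the \emph{lower bound}, select the feature $F^\ast$ attaining the minimum on the right-hand side of~\eqref{eqn:main} and inscribe a model domain $D'\subset D$ containing $\x$. If $F^\ast=\Gamma_i$ is a smooth arc, take $D'$ to be a disk of radius comparable to the minimum of the infimum of the local radius of curvature of $\Gamma_i$ and $\dist(\Gamma_i,\partial D\setminus\Gamma_i)$, tangent to $\Gamma_i$ at the nearest boundary point to $\x$; the $C^2$ assumption ensures $\x\in D'$ and $d(\x,\partial D')\approx d(\x,\Gamma_i)$. If $F^\ast=\v_j$ is a corner, take $D'$ to be a small triangle with one vertex at $\v_j$ and two sides tucked slightly inside $\Gamma_j^\pm$, which fits inside $D$ since $\alpha_j<\pi$ and $\Gamma_j^\pm$ are $C^2$; on the relevant scale, $d(\x,\text{side}^\pm)\approx d(\x,\Gamma_j^\pm)$. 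The monotonicity $\lambda_n(D,\x)\ge\lambda_n(D',\x)$ (immediate from~\eqref{eqn:def_lambda} since restricting the integral to a subdomain only decreases it) combined with the model estimate then yields the desired lower bound.

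For the \emph{upper bound}, I use~\eqref{eqn:def_lambda} with explicit test polynomials constructed in local coordinates. For a smooth arc $\Gamma_i$, I take a polynomial $f=PQ$ concentrated around $\x$, where $P$ decays on the scale $\rho_n^*(d(\x,\Gamma_i))$ in the direction normal to $\Gamma_i$ at the nearest point and $Q$ is a tangential bump of width of order one; then $f(\x)=1$ and the $L^2$-integral over $D$ is controlled by that over a local strip aligned with the tangent line, yielding $\int_D f^2\lesssim n^{-1}\rho_n^*(d(\x,\Gamma_i))$ after standard rescaling. For a corner $\v_j$, I take a product $f=f^-f^+$ of two such localized polynomials associated respectively with $\Gamma_j^-$ and $\Gamma_j^+$; the $L^2$ integral is bounded by $\rho_n^*(d(\x,\Gamma_j^-))\rho_n^*(d(\x,\Gamma_j^+))$, again after rescaling in normal/tangential coordinates near each of $\Gamma_j^\pm$.

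The main obstacle is the transition between the near-smooth-arc and near-corner regimes: for $\x$ moderately close to a corner $\v_j$ the corner and smooth-arc terms can be of the same order, and the inscribed triangle used for the corner lower bound must not escape $D$ nor touch any non-adjacent boundary piece. I will fix a geometric scale $\delta_0=\delta_0(D)>0$ separating a neighborhood of each corner from the remaining boundary pieces, and argue separately in the set $\{\x:d(\x,\v_j)<\delta_0\text{ for some }j\}$ — where a $C^2$ straightening reduces the problem to the polygonal corner situation of~\cite{Pr-U1}, and the pointwise equivalence of $\rho_n^*$ evaluated on $\Gamma_j^\pm$ and on the tangent lines $\t_j^\pm$ at $\v_j$ follows from elementary geometry using $\alpha_j\in(0,\pi)$ — and in its complement, where $\partial D$ is locally a single $C^2$ arc and the disk model suffices.
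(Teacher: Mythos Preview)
Your overall architecture---split into interior, near a smooth arc, and near a corner; compare with model domains via monotonicity of $\lambda_n$---matches the paper. The smooth-arc case is fine. The corner case, however, has two genuine gaps.

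\textbf{Gap 1: the ``$C^2$ straightening'' step is not available.} A $C^2$ diffeomorphism that sends $\Gamma_j^\pm$ to straight segments is in general not affine, hence does not map $\P_n$ to $\P_n$. So you cannot reduce the curved corner to the polygonal corner of~\cite{Pr-U1} by a change of variables; the Christoffel function is only invariant under affine maps (see~\eqref{eqn:affine}).

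\textbf{Gap 2: the inscribed triangle does not give comparable distances.} Fix $\v_j$ and suppose you take a triangle $T\subset D$ with vertex at $\v_j$ and sides rotated inward by a fixed angle $\eta>0$ from the tangent directions $\t_j^\pm$ (this rotation is forced: if the boundary curves toward the interior, the tangent lines themselves leave $D$). For $\x$ at distance $r$ from $\v_j$ and distance $\epsilon$ from the triangle side near $\Gamma_j^+$, one has $d(\x,\Gamma_j^+)\approx \epsilon+\eta r$ while $d(\x,\text{side}^+)=\epsilon$. Taking $\epsilon\to0$ with $r$ fixed, $\rho_n^*(\epsilon)=n^{-2}$ but $\rho_n^*(\epsilon+\eta r)\approx n^{-1}\sqrt{\eta r}$, so the triangle lower bound $\lambda_n(T,\x)\approx \rho_n^*(d_-')\rho_n^*(d_+')$ is smaller than the target $\rho_n^*(d_j^-(\x))\rho_n^*(d_j^+(\x))$ by a factor $\sim (n\sqrt{r})^{-1}\to 0$. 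Thus ``$d(\x,\text{side}^\pm)\approx d(\x,\Gamma_j^\pm)$'' is false and the lower bound fails. A symmetric obstruction hits the upper bound: if the boundary bends outward, no straight-sided superset tangent to $\Gamma_j^\pm$ at the nearest points contains $D$, so your product test polynomial built from \emph{linear} normal coordinates cannot be integrated over a tractable superset.

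The paper resolves both issues by replacing straight lines with \emph{circles}: for the lower bound it inscribes a ``grain'' $\wt D_-\cap\wt D_+$, the intersection of two discs of radius $2\delta$ tangent to $\Gamma_j^{\pm*}$ at the nearest points $\y_\pm(\x)$ (so the relevant distances match exactly), and proves a new estimate for such domains (Lemma~\ref{lem:grain}) via Videnskii's inequality along circular arcs. For the upper bound it circumscribes $D$ by the intersection of two annuli, using discs $B_+$ tangent from the exterior, and builds the test polynomial as a product of two radial polynomials on these annuli (Lemma~\ref{lem:lower prod of two basic}). The rolling-disc property of $C^2$ curves is precisely what guarantees that a sufficiently small tangent disc stays on one side of the curve---the property that tangent \emph{lines} lack. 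This is the missing ingredient in your plan.
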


When $\x$ is not near a corner point, the arguments of Theorem~\ref{thm:main} work without the requirement $0<\alpha_j<\pi$, see Remark~\ref{rem:cases12}.

Our proof of Theorem~\ref{thm:main} is based on comparison with appropriate reference domains and use of the extremal property~\eqref{eqn:def_lambda}. For the lower bound, we use ``grain''-type domains which are the intersections of two discs of the same radius. The estimate for such reference domain is established in  Section~\ref{sec:lower} by reduction to use of Videnskii-type inequality~\eqref{eqn:Videnskii}. For the upper bound we compare with the domains which are the intersections of two annuli and explicitly construct  in Section~\ref{sec:upper} the required polynomials with small $L_2$ norm and $f(\x)=1$ at a fixed point $\x$. This construction may be of independent interest as the resulting polynomial can be viewed as a multivariate fast decreasing or ``needle'' polynomial. Known results of this type such as in~\cite{Kr15} and~\cite{Kr16} have radial structure, i.e. the decay estimate is given in terms of the distance to $\x$, which is not suitable for our purposes as $d(\x,\Gamma_j^-)$ and $d(\x,\Gamma_j^+)$ can have different magnitude.  


Our goal was obtaining a description of the Christoffel function which is uniform in $n$ and $\x\in D$. The major portion of the proof is focused on the local situation when $\x$ is close to one of the corner points $\v_j$. We remark that it is not hard to reduce the problem to the case when $\partial D$ has only one boundary piece and one corner point. Namely, for arbitrary $D$ with piecewise $C^2$ boundary and corner point $\v$ of $D$, it is possible to construct two domains $D_1$, $D_2$ with piecewise $C^2$ boundary and the only corner point $\v$ such that $D_1\subset D\subset D_2$ and $D$ coincides with $D_1$ and $D_2$ in a neighborhood of $\v$. We chose the general case of finitely many boundary pieces and global exposition as it is not significantly more complicated and includes the transition between the local and the global behavior. 

The methods of this work allow to handle non-convex domains but do not apply to serve angles bigger than $\pi$ or cusps in the boundary of the domain, which are very interesting directions for future research. 





\section{Preliminaries}

In this section we collect the required preliminaries and introduce some necessary notations. 

In what follows, the constants $c$, $c(\cdot)$, $c_1(\cdot)$, $c_2(\cdot)$, $\dots$ are positive and depend only on parameters indicated in the parentheses (if any). The constants $c$ and $c(\cdot)$ may be different at different occurrences despite the same notation being used. This is in contrast to $c_1(\cdot)$, $c_2(\cdot)$, $\dots$ which have the same value at different occurrences for the same arguments. 

By~\eqref{eqn:def_lambda},
\begin{equation}
\label{eqn:compare}
\text{if }D_1\subset D_2\subset \R^2,\text{ then }
\lambda_n(D_1,\x)\le \lambda_n(D_2,\x), \quad \x\in D_1,
\end{equation}
and
\begin{equation}
\label{eqn:affine}
\lambda_n(TD,T\x)=\lambda_n(D,\x)|\det T|, \quad \x\in D,
\end{equation}
where $T\x=\x_0+A\x$ is any non-degenerate affine transform of $\R^2$, i.e., $\x_0\in\R^2$ and $A$ is a $2\times 2$ matrix, $\det T:=\det A\ne0$.

Let $B:=\{\x:|\x|\le1 \}$ denote the unit disc in $\R^2$. By~\cite[Proposition~2.4 and~(2.3)]{Pr},
\begin{equation}\label{eqn:ball}
\lambda_n(B,\x)\approx n^{-1} \rho_n^*(1-|\x|), \quad \x\in B.
\end{equation}
We will use without reference the following properties of $\rho_n^*$ which are straightforward to verify: $\rho_n^*(t)\le t+ n^{-2}$, $\rho_n^*(t)\le \rho_n^*(t') \le \sqrt{\tfrac{t'}{t}} \rho_n^*(t)$, and $\rho_n^*(t+n^{-2})\approx \rho_n^*(t)$, valid for any $0\le t\le t'$.


Local linear extension of a $C^2$ curve $\Gamma$ having a parametrization $\phi$ is a curve $\Gamma^*$ with $C^1$ parametrization $\phi^*$ satisfying $\phi^*(t)=\phi(t)$ for $t\in[0,1]$, $\phi'(t)=(\phi^*)'(0)$ for $t<0$, and $(\phi^*)'(t)=\phi'(1)$ for $t>1$. We choose the domain of $\phi^*$ to be $[-\epsilon,1+\epsilon]$ for a sufficiently small $\epsilon>0$ so that $\phi^*$ is injective possibly with the exception of $\phi^*(0)=\phi^*(1)$ in case $\phi$ was closed. In other words, $\Gamma^*$ is  obtained by extending $\Gamma$ beyond the beginning point $\phi(0)$ and the end point $\phi(1)$ by straight line segments of strictly positive lengths belonging to the lines tangent to $\Gamma$ at these two points, respectively. 

We need to introduce a notation. Suppose $\Gamma\subset\R^2$ and $\y\in \Gamma$ are such that $\Gamma$ is a $C^2$ curve in a neighborhood of $\y$. Let $\u$ be a unit normal vector to $\Gamma$ at $\y$, and in the case $\Gamma=\partial D$ for a domain $D$, we choose $\u$ to be pointing outwards from $D$. For any $r>0$, we denote by 
\begin{equation}\label{eqn:Bpm def}
B_+(r,\y,\Gamma):=rB+\y+ r\u
\qtq{and} 
B_-(r,\y,\Gamma):=rB+\y- r\u
\end{equation}
the two discs of radius $r$ tangent to $\Gamma$ at $\y$.

We need that closed $C^2$ curves without corner points possess a {\it rolling disc property}. Namely, if $\Gamma$ is such a curve, then there exists  $r=r(\Gamma)>0$ such that for any $0<r'\le r$ and any point $\y\in\Gamma$ we have $B_\pm(r',\y,\Gamma)\cap \Gamma=\{\y\}$. While in a neighborhood of $\y$ such a statement follows from standard differential geometry (curvature is separated from zero), the stated above global version follows from a generalization of Blaschke's rolling theorem~\cite{Wa}*{Th.~1~(iii)~and~(v)}. In the proof of the main result we will extend the rolling disc property to certain non-closed $C^2$ curves.

We will need the following analog of Bernstein and Markov inequalities due to Videnskii~\cite{Vi} (also can be found in~\cite[4.1~E.19, p.~242]{Bo-Er}):
for a trigonometric polynomial $T_n$ of degree $\le n$ and $\theta\in(-\beta,\beta)$, $\beta\in(0,\pi)$ one has
\begin{equation*}
|T_n'(\theta)|\le 
\min\left\{\frac{n\cos\frac{\theta}{2}}{\sqrt{\sin^2\frac{\beta}{2} - \sin^2\frac{\theta}{2}}}, (1+o(1)) 2n^2 \cot\tfrac{\beta}{2} \right\}\|T_n\|_{L_{\infty}([-\beta,\beta])},
\end{equation*}
where $o(1)=0$ for every $n>\tfrac12\sqrt{3\tan^2(\beta/2)+1}$, so $(1+o(1)) 2 \cot\tfrac{\beta}{2}\le c(\beta)$ for all $n$. (In fact, the Markov-type inequality $|T_n'(\theta)|\le c(\beta) n^2 \|T_n\|_{L_{\infty}([-\beta,\beta])}$ was obtained much earlier by Jackson~\cite[p.~889]{Ja}; the Videnskii's inequality has sharp constant for large $n$.) The above implies
\begin{equation}\label{eqn:Videnskii}
|T_n'(\theta)|\le \tilde c(\beta) \frac{\|T_n\|_{L_{\infty}([-\beta,\beta])}}{\rho_n^*(\beta-|\theta|)},
\end{equation}
where $\tilde c(\beta)>0$ can be assumed to be a decreasing function of $\beta\in(0,\pi)$.

We will use the notations $\diam(\cdot)$ and $\intr(\cdot)$ to denote the diameter and the interior of a planar set, respectively; $\meas_d(\cdot)$ will stand for the $d$-dimensional Lebesgue measure. We will also need the distance notation $d(X,Y):=\inf_{\x\in X,\y\in Y}|\x-\y|$ for two subsets $X,Y\subset \R^2$.

\section{Lower bound for specific domains}\label{sec:lower}

The main result of this section is the following lemma establishing an appropriate lower bound for ``grain''-type domain, which is the intersection of two discs of the same radius.
\begin{lemma}\label{lem:grain}
	Let $0<h<2$, $D_1:=B$, $D_2:=B+(0,h)$, $D:=D_1\cap D_2$, for $\x\in D$ $d_i^*(\x):=1-|\x-(i-1)(0,h)|$ is the distance from $\x$ to $\partial D_i$, $i=1,2$. Then for any $\x\in D$
	\[
	\lambda_n(D,\x)\ge c(h) \rho_n^*(d_1^*(\x)) \rho_n^*(d_2^*(\x)).
	\]
\end{lemma}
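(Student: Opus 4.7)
The plan is to apply the extremal formula~\eqref{eqn:def_lambda}: for every $f\in\P_n$ with $f(\x)=1$ I need to show $\int_D f^2\ge c(h)\rho_n^*(d_1^*(\x))\rho_n^*(d_2^*(\x))$. By the reflective symmetry of $D$ about the line $y=h/2$ (which swaps the two discs and hence the roles of $d_1^*$ and $d_2^*$), I may assume $d_1^*(\x)\le d_2^*(\x)$. My strategy is to combine an angular trigonometric Christoffel-type estimate (extracted from~\eqref{eqn:Videnskii}) with a radial univariate Christoffel bound, working in polar coordinates centered at the origin.

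First I would extract from~\eqref{eqn:Videnskii}, by the standard ``Bernstein-implies-Christoffel'' argument, the trigonometric Christoffel-type bound
\begin{equation*}
T_n(\theta_*)^2\le\frac{c(\beta)}{\rho_n^*(\beta-|\theta_*|)}\int_{-\beta}^\beta T_n(\theta)^2\,d\theta,\qquad\theta_*\in[-\beta,\beta],
\end{equation*}
valid for every trigonometric polynomial $T_n$ of degree $\le n$ and any $\beta$ in a fixed compact subinterval of $(0,\pi)$.

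Next I introduce polar coordinates $(\rho,\theta)$ at the origin and write $\x=(\rho_0\cos\theta_0,\rho_0\sin\theta_0)$, so that $d_1^*(\x)=1-\rho_0$. For each admissible $\rho$ the arc $A(\rho):=\{\y\in D:|\y|=\rho\}$ is centered by symmetry at $\theta=\pi/2$ with some angular half-width $\beta(\rho)\in(0,\pi]$, and its endpoints (when they exist) lie on $\partial D_2$. Applying the displayed bound to the trigonometric polynomial $\theta\mapsto f(\rho\cos\theta,\rho\sin\theta)$ of degree $\le n$ yields
\begin{equation*}
\int_{A(\rho)}f^2\,ds\ge c(h)\,\rho\,\rho_n^*\bigl(\beta(\rho)-|\theta_0-\pi/2|\bigr)\,g(\rho)^2,
\end{equation*}
where $g(\rho):=f(\rho\x/\rho_0)$ is a univariate polynomial of degree $\le n$ with $g(\rho_0)=1$. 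An elementary geometric computation --- exploiting that the nearer endpoint of $A(\rho)$ lies on $\partial D_2$, so that the arc-length from $\rho\x/\rho_0$ to that endpoint dominates $d_2^*(\rho\x/\rho_0)$ --- will show $\rho(\beta(\rho)-|\theta_0-\pi/2|)\ge c(h)d_2^*(\x)$ for $\rho$ in a suitable neighborhood $I$ of $\rho_0$.

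Integrating over $\rho\in I$ and applying Fubini produces
\begin{equation*}
\int_D f^2\,d\y\ge c(h)\rho_n^*(d_2^*(\x))\int_I g(\rho)^2\,d\rho\ge c(h)\rho_n^*(d_1^*(\x))\rho_n^*(d_2^*(\x)),
\end{equation*}
the last inequality being a univariate Christoffel bound at $\rho_0$ on the segment of the ray $\{\rho\x/\rho_0:\rho>0\}\cap D$. The hard part will be choosing $I$ so that both bounds hold simultaneously: $I$ must be large enough for the univariate step to yield $\rho_n^*(d_1^*(\x))$ but small enough for the angular step to retain $\rho_n^*(d_2^*(\x))$ --- feasible under $d_1^*\le d_2^*$, but requiring careful geometric analysis of the intersection of two unit discs. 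Three edge cases should be dispatched separately and are easier: when $A(\rho)$ is a full circle (standard trigonometric Christoffel on the full period applies, without Videnskii); when $\rho_0$ is small (possible only for $h\le 1$, in which case $\x$ is at a fixed positive distance from $\partial D$ and the claim reduces to the interior bound $\lambda_n(D,\x)\ge c(h)$ available from~\eqref{eqn:compare} and~\eqref{eqn:ball}); and when $\x$ is very close to a corner, where it may be necessary to repeat the argument in polar coordinates centered at the other disc, or at the corner itself, to avoid degeneracy of the inward ray.
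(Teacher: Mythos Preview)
Your radial--angular decomposition breaks down near a corner when $h>\sqrt{2}$. Concretely, for $\x$ close to the corner $\u_+=(\sqrt{1-h^2/4},h/2)$ the ray from the origin through $\x$ meets $D$ in a segment of length only $\approx c(h)(d_1^*(\x)+d_2^*(\x))$ (indeed, the ray through $\u_+$ itself meets $D$ only at $\u_+$, since for $h>\sqrt{2}$ one has $(1-t)\u_+\notin D_2$ for small $t>0$). On an interval of length $L\approx d_1^*+d_2^*$ the univariate Christoffel function at $\rho_0$ is $\approx L n^{-2}+n^{-1}\sqrt{d_1^* L}$, which for $d_1^*,d_2^*\lesssim n^{-2}$ is of order $n^{-6}$ rather than the required $\rho_n^*(d_1^*)\rho_n^*(d_2^*)\approx n^{-4}$; thus the final product is too small by a full factor. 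Switching to polar coordinates at the other centre $(0,h)$ is the mirror image and has the same defect, and centring at the corner destroys the alignment with either boundary circle that your argument needs. A secondary gap: the ``Bernstein-implies-Christoffel'' step does not follow from~\eqref{eqn:Videnskii} as directly as you suggest, because Videnskii bounds $|T_n'|$ by $\|T_n\|_{L_\infty}/\rho_n^*(\cdot)$, not by $|T_n(\theta_*)|/\rho_n^*(\cdot)$, and you have no a~priori relation between $\|T_n\|_{L_\infty}$ and $|T_n(\theta_*)|$.

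The paper sidesteps both problems by a different mechanism. It first uses a convexity/homothety argument (Corollary~\ref{col:norm control convex}) to replace the extremal polynomial by a dilate $\widetilde P$ with $\widetilde P(\x)=1$ and $\|\widetilde P\|_{L_\infty(D)}\le 2$; once $L_\infty$ is controlled, Videnskii (now legitimately applied) shows $\widetilde P\ge\tfrac12$ on a sub-arc $A_3$ of length $\approx\rho_n^*(d_1^*(\x))$ of the circle \emph{concentric with $D_2$} through $\x$, and then $\widetilde P\ge\tfrac14$ on further sub-arcs $A_4(\y)$ concentric with $D_1$ of length $\approx\rho_n^*(d_2^*(\x))$ through each $\y\in A_3$. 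The two families of concentric arcs (one per disc) are transversal with angle bounded away from $0$ and $\pi$, so the union has measure $\gtrsim \rho_n^*(d_1^*)\rho_n^*(d_2^*)$, giving the $L_2$ lower bound directly. The key ideas you are missing are the $L_\infty$ control via homothety and the use of arcs concentric with \emph{both} discs rather than one family of arcs plus radial lines.
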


We need some preparation before the proof of Lemma~\ref{lem:grain}.

A convex body in $\R^2$ is any convex compact set with non-empty interior.
For a $D\subset\R^2$ convex body in $\R^2$, $\x\in D$ and $\mu> 0$, denote by $D_{\mu,\x}:=\x+\mu(D-\x)$ the homothety of $D$ with the ratio $\mu$ and the center $\x$. 
\begin{lemma}\label{lem:convex inclusion}
Let $D\subset\R^2$ be a convex body in $\R^2$, $\x$ be an interior point of $D$ and $0<\mu<1$. Then for any $\y\in D_{1-\mu,\x}$ we have $D_{\mu,\x}-\x+\y\subset D$.
\end{lemma}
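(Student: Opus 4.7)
The plan is to unwind the homothety definitions and reduce the claim to a single convex-combination inside $D$.

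First, I would expand both sides explicitly. By definition, $D_{\mu,\x}-\x+\y = \{\mu(\z-\x)+\y : \z\in D\}$, and $\y\in D_{1-\mu,\x}$ means $\y = \x + (1-\mu)(\w-\x)$ for some $\w\in D$. Substituting this expression for $\y$ into the arbitrary element $\mu(\z-\x)+\y$ of $D_{\mu,\x}-\x+\y$, the $\x$ terms cancel and I obtain
\[
\mu(\z-\x) + \x + (1-\mu)(\w-\x) = \mu\z + (1-\mu)\w.
\]
Since $\z\in D$, $\w\in D$, $0<\mu<1$, and $D$ is convex, the point $\mu\z+(1-\mu)\w$ lies in $D$. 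Since $\z\in D$ was arbitrary, this proves the inclusion $D_{\mu,\x}-\x+\y\subset D$.

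There is no real obstacle here; the lemma is essentially a restatement of convexity after chasing notations. The assumption that $\x$ is in the interior of $D$ is not needed for the inclusion itself (any $\x\in D$ suffices), but presumably the authors will invoke it in subsequent applications to ensure that $D_{\mu,\x}$ and $D_{1-\mu,\x}$ have non-empty interiors. Thus the written-up proof would amount to the two-line substitution above, with at most a remark noting that the assumption $\x\in\intr(D)$ is used only implicitly through the non-degeneracy of the homothetic copies.
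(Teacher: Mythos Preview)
Your proof is correct and is essentially the same as the paper's: both reduce the inclusion to the convexity identity $\mu D + (1-\mu)D \subset D$, the paper via Minkowski-sum notation and you via a pointwise substitution. Your observation that the hypothesis $\x\in\intr(D)$ is not actually used for the inclusion is also accurate.
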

\begin{proof}
	It is enough to show that
	\begin{equation*}
	\mu(D-\x)+(1-\mu)(D-\x)+\x \subset D,
	\end{equation*}
	which is immediate since $D$ is convex and so $\mu D+ (1-\mu)D=D$.
\end{proof}

The following corollary allows to control the $L_\infty$ norm of the ``needle'' polynomial realizing $\lambda_n(D,\x)$ (see~\eqref{eqn:def_lambda}) which is not guaranteed to be attained at $\x$.
\begin{corollary}
	\label{col:norm control convex}
	Let $D\subset\R^2$ be a convex body in $\R^2$, $\x$ be an interior point of $D$, $0<\mu<1$, and $P\in\P_{n}$ be a polynomial satisfying $P(\x)=1$ and $\|P\|^2_{L_2(D)}=\lambda_n(D,\x)$. Then
	\begin{equation*}
		\|P\|_{L_\infty(D_{1-\mu,\x})} \le \mu^{-1}.
	\end{equation*}
\end{corollary}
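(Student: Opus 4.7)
The plan is to argue by rescaling and comparison, using the extremal property of $\lambda_n(D,\x)$ together with the inclusion from Lemma~\ref{lem:convex inclusion}. Fix any $\y\in D_{1-\mu,\x}$; if $P(\y)=0$ the bound is trivial, so assume $P(\y)\ne 0$. I would then define the auxiliary polynomial
\[
R(\z):=\frac{P\bigl(\y+\mu(\z-\x)\bigr)}{P(\y)},
\]
which lies in $\P_n$ and satisfies $R(\x)=1$. By the extremal property \eqref{eqn:def_lambda},
\[
\lambda_n(D,\x)\le \int_D R(\z)^2\, d\z.
\]

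Next I would evaluate the right-hand side via the change of variables $\w=\y+\mu(\z-\x)$, which has Jacobian $\mu^2$ and maps $D$ onto $\y-\x+\mu(D-\x)=D_{\mu,\x}-\x+\y$. By Lemma~\ref{lem:convex inclusion} this image is contained in $D$, so
\[
\int_D R(\z)^2\, d\z=\frac{1}{\mu^2 P(\y)^2}\int_{D_{\mu,\x}-\x+\y} P(\w)^2\, d\w\le \frac{1}{\mu^2 P(\y)^2}\int_D P(\w)^2\, d\w=\frac{\|P\|_{L_2(D)}^2}{\mu^2 P(\y)^2}.
\]
Combining with the extremal inequality and using $\|P\|_{L_2(D)}^2=\lambda_n(D,\x)$ gives $P(\y)^2\le \mu^{-2}$, hence $|P(\y)|\le \mu^{-1}$. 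Since $\y\in D_{1-\mu,\x}$ was arbitrary, this yields the claimed bound.

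The only potential obstacle is verifying the inclusion of the shifted homothetic image of $D$ back into $D$ after the change of variables, but this is exactly what Lemma~\ref{lem:convex inclusion} supplies; everything else is a direct computation and invocation of the minimality of $P$.
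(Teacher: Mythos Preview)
Your proof is correct and follows essentially the same route as the paper's, which compares $\lambda_n(D,\x)$ with $\lambda_n(D,\y)$ via \eqref{eqn:compare} and \eqref{eqn:affine}; unwrapping those two properties yields precisely your rescaled test polynomial $R$ and the change of variables, with Lemma~\ref{lem:convex inclusion} playing the same role in both arguments. One small slip: the image of $D$ under $\z\mapsto \y+\mu(\z-\x)$ is $\y+\mu(D-\x)$, not $\y-\x+\mu(D-\x)$, but your final identification with $D_{\mu,\x}-\x+\y$ is correct and the argument goes through unchanged.
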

\begin{proof}
Let $M:=\|P\|_{L_\infty(D_{1-\mu,\x})}\ge 1$ be attained at a point $\y\in D_{1-\mu,\x}$. Then
\[
\lambda_n(D,\y) = \min_{Q\in \P_{n}, |Q(\y)|=1} \|Q\|_{L_2(D)}^2 \le \frac1{M^2}\|P\|^2_{L_2(D)} = \frac1{M^2} \lambda_n(D,\x),
\]
so by~\eqref{eqn:compare}, Lemma~\ref{lem:convex inclusion} and~\eqref{eqn:affine}, we conclude that
\begin{align*}
M^2  \le \frac{\lambda_n(D,\x)}{\lambda_n(D,\y)} \le
\frac{\lambda_n(D,\x)}{\lambda_n(D_{\mu,\x}-\x+\y,\y)} = \mu^{-2}.
\end{align*}
\end{proof}

\begin{remark}
	Lemma~\ref{lem:convex inclusion} and Corollary~\ref{col:norm control convex} are valid in $\R^d$ with $\mu^{-1}$ replaced by $\mu^{-d/2}$ in the conclusion of Corollary~\ref{col:norm control convex}. The proofs are exactly the same with $\mu^{-2}$ replaced by $\mu^{-d}$ in the end.
\end{remark}

The restriction of an algebraic polynomial to a circular arc is a trigonometric polynomial. With this in mind, the next lemma employs Videnskii inequality in our settings.
\begin{lemma}
	\label{lem:videnskii arcs}
	Let $A\subset \R^2$ be an arc of length $l$ of a circle of radius $r$, $l<2\pi r$, $\x\in A$, $\eta<l$, $r<\eta^{-1}$, $\eta>0$, $d$ be the distance from $\x$ to the 2-point set consisting of the endpoints of $A$, $f\in\P_n$, $f(\x)=1$. Then
	\begin{equation}
	\label{eqn:value control by Videnskii}
	f(\y)\ge \frac{1}{2} \quad\text{whenever}\quad
	\y\in A\quad\text{and}\quad |\x-\y|\le c(\eta) \|f\|_{L_\infty(A)}^{-1} \rho_n^*(d).
	\end{equation}
\end{lemma}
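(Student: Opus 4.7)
The plan is to reduce the claim to Videnskii's inequality \eqref{eqn:Videnskii} by pulling $f$ back to a trigonometric polynomial on the arc, parametrized by the central angle, and then integrating along the arc between $\x$ and $\y$ to compare $f(\y)$ with $f(\x)=1$.

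After a rigid motion of $\R^2$, I would write $A=\{\c+r(\cos\theta,\sin\theta):\theta\in[-\beta,\beta]\}$ with $\beta:=l/(2r)$. The hypotheses $l>\eta$, $l<2\pi r$, and $r<\eta^{-1}$ give $r\in[\eta/(2\pi),\eta^{-1}]$ and $\beta\in[\eta^2/2,\pi)$, so every constant depending on $r$ or $\beta$ is absorbed into one depending only on $\eta$. Let $\theta_x,\theta_y\in[-\beta,\beta]$ be the angles of $\x,\y$; by the symmetry $\theta\mapsto-\theta$ I may take $\theta_x\ge 0$ and set $\gamma:=\beta-\theta_x$. The chord formula gives $d=2r\sin(\gamma/2)$ and $|\x-\y|=2r\sin(|\theta_y-\theta_x|/2)$, so both $\gamma/d$ and $|\theta_y-\theta_x|/|\x-\y|$ lie in intervals depending only on $\eta$, and therefore $\rho_n^*(\gamma)$ and $\rho_n^*(d)$ differ by factors depending only on $\eta$. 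The composition $T(\theta):=f(\c+r(\cos\theta,\sin\theta))$ is a trigonometric polynomial of degree $\le n$ with $T(\theta_x)=1$ and $\|T\|_{L_\infty([-\beta,\beta])}=\|f\|_{L_\infty(A)}$, and \eqref{eqn:Videnskii} yields $|T'(\theta)|\le C_1(\eta)\|f\|_{L_\infty(A)}/\rho_n^*(\beta-|\theta|)$ on $(-\beta,\beta)$.

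The target bound $f(\y)\ge 1/2$ follows from $|T(\theta_y)-1|\le\int_{\theta_x}^{\theta_y}|T'(\theta)|\,d\theta\le 1/2$, which I would establish by choosing $c(\eta)$ small and splitting into two regimes. When $d\ge n^{-2}$, the bound $\rho_n^*(d)\le 2d$ combined with the hypothesis forces $|\theta_y-\theta_x|\le\gamma/2$; then $\beta-|\theta|\ge\gamma/2$ along the path of integration, and the integral is controlled by $|\theta_y-\theta_x|/\rho_n^*(\gamma/2)\le C(\eta)c(\eta)\|f\|_\infty^{-1}$. When $d<n^{-2}$, I would instead use the global Markov consequence $|T'(\theta)|\le C_1(\eta)n^2\|f\|_\infty$ (from $\rho_n^*\ge n^{-2}$) to obtain $|T(\theta_y)-1|\le C(\eta)n^2\rho_n^*(d)c(\eta)\le 2C(\eta)c(\eta)$, since $n^2\rho_n^*(d)\le 2$ in this regime. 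In either case, a sufficiently small $c(\eta)$ gives $|T(\theta_y)-1|\le 1/2$, hence $f(\y)=T(\theta_y)\ge 1/2$.

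The principal delicacy is the coexistence of the local Bernstein-type bound (effective when $\rho_n^*(\beta-|\theta|)$ is substantial) with the global Markov-type bound (needed when $\x$ sits within $n^{-2}$ of an endpoint, where the Bernstein denominator saturates at $n^{-2}$). Once the chord-to-angle conversion is carefully tracked so that all $r$- and $\beta$-dependence folds into $\eta$-dependent constants, the remaining calculus with $\rho_n^*$ is routine.
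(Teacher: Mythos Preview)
Your proposal is correct and follows essentially the same approach as the paper: parametrize the arc by central angle, pull $f$ back to a trigonometric polynomial $T$, and use Videnskii's inequality~\eqref{eqn:Videnskii} to bound $|T(\theta_y)-T(\theta_x)|$, with the chord--angle conversion absorbing all $r$- and $\beta$-dependence into $\eta$-constants. The only cosmetic difference is that the paper applies the mean value theorem and a single unified estimate $\rho_n^*(\beta-|\theta|)\ge c\,\rho_n^*(d')$ on the relevant interval, whereas you integrate and split into the regimes $d\ge n^{-2}$ and $d<n^{-2}$; both arguments amount to the same computation.
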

\begin{proof}
	We can parametrize $A$ so that
	\[ 
	A=\phi([-\beta,\beta]), \quad\text{where}\quad \phi(t)=\c+r(\cos(t-t_0),\sin(t-t_0)),
	\]
	$\c\in\R^2$ and $\beta=\tfrac{l}{2r}$. Note that $T_n(t):=f(\phi(t))$ is a trigonometric polynomial of degree $\le n$. We can assume that $\x$ is closer to $\phi(\beta)$ than to $\phi(-\beta)$, then $\x=\phi(\beta-d')$, where $d'=2\arcsin(\tfrac{d}{2r}) \approx \tfrac{d}{r}$ and $\beta-d'\ge0$. With $\y=\phi(t)$, we will show that
	\begin{equation}\label{eqn:d'}
	f(\y)=T_n(t)\ge\tfrac{1}{2} \quad\text{whenever}\quad
	t\in[-\beta,\beta] \quad\text{and}\quad
	|\beta-d'-t|\le \gamma \|f\|_{L_\infty(A)}^{-1} \rho_n^*(d')
	\end{equation} 
	for a small enough $\gamma=\gamma(\eta)<\tfrac12$. Assuming $t$ is as in~\eqref{eqn:d'}, so by $\gamma<\tfrac12$ we have $|\beta-d'-t|\le\tfrac{d'}2+\tfrac12n^{-2}$ implying $\beta-\tfrac32d'-\tfrac12n^{-2}\le t\le \beta-\tfrac12d'+\tfrac12n^{-2}$. For some $ \theta $ between $\beta-d'$ and $t$, using~\eqref{eqn:Videnskii} we have 
	\begin{align*}
	|1-T_n(t)| = |T_n(\beta-d') - T_n(t)|&=|T_n'(\theta)||\beta-d'-t| \\
	&\le \tilde c(\beta) \frac{\|T_n\|_{L_\infty([-\beta,\beta])}}{\rho_n^*(\beta-|\theta|)} |\beta-d'-t| \\
	&\le c \tilde c(\beta) \frac{\|T_n\|_{L_\infty([-\beta,\beta])}}{\rho_n^*(d')} |\beta-d'-t| \\
	&\le c \tilde c(\beta) \frac{\|f\|_{L_\infty(A)}}{\rho_n^*(d')} \gamma \|f\|_{L_\infty(A)}^{-1} \rho_n^*(d') \le \tfrac12, 
	\end{align*}	
	provided $\gamma=\gamma(\eta)$ is sufficiently small (we have $\tilde c(\beta)\ge \tilde c(\tfrac{\eta^2}{2})$). Finally, \eqref{eqn:d'} implies~\eqref{eqn:value control by Videnskii} because $|\beta-d'-t|r \approx |\x-\y|=|\phi(\beta-d')-\phi(t)|$ and $\rho_n^*(d)\le c(\eta)\rho_n^*(d')$.
\end{proof}

Now we are ready for the proof of the main result of this section.
\begin{proof}[Proof of Lemma~\ref{lem:grain}.]
Without loss of generality, assume $d_1^*(\x)\le d_2^*(\x)$. We will have three cases depending on the values of $d_1^*(\x)$ and $d_2^*(\x)$ in relation to a parameter $\delta=\delta(h)>0$ which will be selected later.

Case~1: $d_1^*(\x)\ge \tfrac\delta2$. Then $\wt B:=\tfrac\delta2 B+\x$ satisfies $\wt B\subset D$, so by~\eqref{eqn:compare}, \eqref{eqn:affine} and~\eqref{eqn:ball}
\begin{align*}
	\lambda_n(D,\x) \ge \lambda_n(\wt B,\x)=\tfrac{\delta^2}{4} \lambda_n(B,(0,0))\approx c(\delta) n^{-2}\ge c(\delta) \rho_n^*(d_1^*(\x)) \rho_n^*(d_2^*(\x)).
\end{align*}

Case~2: $d_1^*(\x)< \tfrac\delta2$ and $d_2^*(\x)\ge\delta$. Let $\y\in\partial B$ be the point closest to $\x$, i.e. $\x=(1-d_1^*(\x))\y$. Now we consider the disc $\wt B:=\tfrac\delta2 B+(1-\tfrac\delta2)\y$, clearly $\wt B\subset B$. Note that due to $d_1^*(\x)< \tfrac\delta2$, the point $\x$ belongs to the line segment joining the center $(1-\tfrac\delta2)\y$ of $\wt B$ and $\y$, in particular, $\x\in \wt B$. Therefore, for any $\z\in\wt B$
\[
|\z-(0,h)|\le |\z-\x|+|\x-(0,h)|\le \delta + 1-d_2^*(\x)\le 1,
\]
implying $\wt B\subset B+(0,h)$ and, consequently, $\wt B\subset D$.
By~\eqref{eqn:compare}, \eqref{eqn:affine} and~\eqref{eqn:ball}
\begin{align*}
\lambda_n(D,\x) \ge \lambda_n(\wt B,\x)=\tfrac{\delta^2}{4} \lambda_n(B,(1-\tfrac{2d_1^*(\x)}{\delta})\y)\approx c(\delta) n^{-1} \rho_n^*(d_1^*(\x)) \ge c(\delta) \rho_n^*(d_1^*(\x)) \rho_n^*(d_2^*(\x)).
\end{align*}

Case~3: $d_2^*(\x)<\delta$. Let us make the required choice of $\delta=\delta(h)>0$ at this time. First, we will impose that
\begin{equation}\label{eqn:arc restriction}
h^2+(1-\delta)^2> 1.
\end{equation}
Next, the set $\partial D_1 \cap \partial D_2$ consists of two points, one of which is $\u_+:=\left(\sqrt{1-(\tfrac{h}{2})^2},\tfrac{h}{2}\right)$. Observe that the set
\[
X(\delta):=\{\y=(y_1,y_2)\in D:y_1\ge0,\ d(\y,\partial D_1)\le\delta,\ d(\y,\partial D_2)\le\delta \}
\]
satisfies $\u_+\in X(\delta)$ and $\ds\lim_{\delta\to0+}\diam(X(\delta))=0$. Therefore, we can choose $\delta=\delta(h)>0$ so that
\begin{equation}\label{eqn:delta choice 0h}
d((0,h),X(\delta))>c_1(h)
\end{equation}
and
\begin{equation}\label{eqn:delta choice separate}
y_1\ge c_1(h) \qtq{and} y_2\ge c_1(h) \qtq{for any} \y=(y_1,y_2)\in X(\delta). 
\end{equation} 

We use Corollary~\ref{col:norm control convex} with $\mu=\tfrac12$. For the dilated polynomial $\wt P(\cdot):=P(2(\cdot-\x)+\x)$ we have
\begin{equation}\label{eqn:new pol}
\lambda_n(D,\x)=\tfrac14\|\wt P\|^2_{L_2(D_{2,\x})}\ge \tfrac14 \|\wt P\|^2_{L_2(D)},
\quad \wt P(\x)=1
\quad\text{and}\quad
\|\wt P\|_{L_\infty(D)}\le 2,
\end{equation}
so to complete the proof it is sufficient to find a set $D'\subset D$ with
\begin{equation}\label{eqn:D'}
\meas_2 (D') \ge c(h) \rho_n^*(d_1^*(\x)) \rho_n^*(d_2^*(\x)) \qtq{and}
\wt P(\y)\ge\tfrac14 \qtq{for every} \y\in D'.
\end{equation}

For any point $\y\in D$, it will be convenient to denote by 
\begin{equation*}
A_i(\y):= D \cap \partial(|\y-(i-1)(0,h)|B+(i-1)(0,h))
\end{equation*}
the largest arc of the circle concentric with $\partial D_i$, passing through $\y$ and located inside $D$, $i=1,2$. 
Since $\x\in X(\delta)$, by~\eqref{eqn:delta choice 0h}, the length of $A_2(\x)$ is at least $c(h)$ (the radius is clearly $\le 1$), so by Lemma~\ref{lem:videnskii arcs}, there is a choice of $\gamma_1=\gamma_1(h)$ such that $\wt P(\y)\ge\tfrac{1}{2}$ for any $\y\in A_3$, where
\begin{equation*}
A_3:=\{\y\in A_2(\x):|\x-\y|\le \gamma_1 \rho_n^*(d_1^*(\x)) \}.
\end{equation*}
In addition, we can assume that $\gamma_1\cdot(1+\delta)<\tfrac{c_1(h)}{2}$ so that by $\x\in X(\delta)$, $\rho_n^*(\delta)\le n^{-2}+\delta$ and~\eqref{eqn:delta choice separate}, we have
\begin{equation}\label{eqn:gamma 1 choice}
y_1\ge \tfrac12c_1(h) \qtq{and} y_2\ge \tfrac12c_1(h) \qtq{for any} \y=(y_1,y_2)\in A_3, 
\end{equation}  
in particular,
\begin{equation}\label{eqn:norm y in A3}
\inf(\{|\y|:\y\in A_3 \}) \ge \tfrac{1}{\sqrt{2}} c_1(h).
\end{equation}
We will show that
\begin{equation}\label{eqn:A3 r coord bound}
\meas_1(\{|\y|:\y\in A_3 \}) \ge c(h) \rho_n^*(d_1^*(\x)).
\end{equation}
Note that the circle containing $A_3$ has its largest first coordinate at the point $(1-d_2^*(\x),h)$, which, by~\eqref{eqn:arc restriction}, is outside of $D_1$. Therefore, $A_3$ is the graph of an increasing function (of the first coordinate), so we can denote the endpoints of $A_3$ as $(z_1,z_2)$ and $(z_1+v_1,z_2+v_2)$, where $v_1>0$ and $v_2>0$. By the definition of $A_3$, $|(v_1,v_2)|\approx c(h) \rho_n^*(d_1^*(\x))$. Now~\eqref{eqn:A3 r coord bound} follows from~\eqref{eqn:gamma 1 choice} and $A_3\subset B$ as follows:
\begin{align*}
\meas_1(\{|\y|:\y\in A_3 \}) &\ge |(z_1+v_1,z_2+v_2)| - |(z_1,z_2)|
=\frac{2z_1v_1+2z_2v_2+v_1^2+v_2^2}{|(z_1+v_1,z_2+v_2)| + |(z_1,z_2)|} \\
&\ge z_1 v_1+ z_2 v_2 \ge \tfrac{c_1(h)}{2}(v_1+v_2) \ge c(h) \rho_n^*(d_1^*(\x)). 
\end{align*}

By~\eqref{eqn:norm y in A3}, the length of $A_1(\y)$ is at least $c(h)$ for any $\y\in A_3$ while the radius is clearly $\le 1$, so by Corollary~\ref{col:norm control convex}, we can choose $\gamma_2$ so that $\wt P(\z)\ge \tfrac14$ for any $\z\in A_4(\y)$, where
\begin{equation}\label{eqn:A_4 def}
A_4(\y):=\{\z\in A_1(\y): |\y-\z|\le \gamma_2 \rho_n^*(d_2^*(\x)) \}.
\end{equation}
Remark that this choice of $\gamma_2$ is independent of the choice of $\y\in A_3$ as it only depends on $c_1(h)$ in~\eqref{eqn:norm y in A3}. 

By the construction, $\wt P(\z)\ge \tfrac14$ for any $\z\in D'$, where
$
\ds D':=\bigcup_{\y\in A_3} A_4(\y),
$ 
so it remains to justify the first inequality in~\eqref{eqn:D'}. This can be conveniently seen using polar coordinates $(r,\theta)$. For any point in $D'$ we have $r\ge c(h)$ by~\eqref{eqn:norm y in A3}. For each fixed $r$, the measure of $\theta$ in $D'$, which is the length of some arc $A_4(\y)$, $\y\in A_3$, is at least $c(h)\rho_n^*(d_2^*(\x))$ by~\eqref{eqn:norm y in A3} and~\eqref{eqn:A_4 def}. The lower bound for the measure of $r$ in $D'$ is provided by~\eqref{eqn:A3 r coord bound}. This completes the proof. 
\end{proof}


\section{Lemmas for upper bounds}\label{sec:upper}

We begin with a basic univariate construction.
\begin{lemma}\label{lem:DPpol}
For any $t\in[0,1]$ and any positive integer $n$ there exists $Q\in\P_{n/2,1}$ such that $Q(1-t)=1$ and 
\begin{equation}\label{eqn:univariate pol estimate}
|Q(s)|\le c \frac{\rho_n^*(t)}{\rho_n^*(t)+|1-t-s|}, \quad s\in[-1,1].
\end{equation}
\end{lemma}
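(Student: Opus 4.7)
The plan is to realize $Q$ as a symmetrized Fej\'er kernel pulled back through $s=\cos\theta$. Set $\theta_0:=\arccos(1-t)\in[0,\pi/2]$ and $N:=\lfloor n/2\rfloor$, and let
\[
F_N(\phi):=\frac{1}{N+1}\left(\frac{\sin((N+1)\phi/2)}{\sin(\phi/2)}\right)^2
\]
be the classical Fej\'er kernel of degree $N$, which is non-negative with $F_N(0)=N+1$ and satisfies $F_N(\phi)\le c/(N\sin^2(\phi/2))$. Then
\[
\tilde T(\theta):=F_N(\theta-\theta_0)+F_N(\theta+\theta_0)
\]
is a non-negative even trigonometric polynomial of degree $\le N$, so $\tilde T(\theta)=\tilde Q(\cos\theta)$ for a unique $\tilde Q\in\P_{N,1}$. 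Set $Q:=\tilde Q/\tilde Q(1-t)$; this is legitimate because $\tilde Q(1-t)=\tilde T(\theta_0)\ge F_N(0)=N+1$, and by construction $Q(1-t)=1$ and $Q\in\P_{N,1}\subseteq\P_{n/2,1}$.

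For the pointwise bound, take $\theta\in[0,\pi]$ with $s=\cos\theta$ and set $\alpha:=|\sin((\theta-\theta_0)/2)|$, $\beta:=\sin((\theta+\theta_0)/2)$, $\gamma:=\sin(\theta_0/2)$. Sum-to-product and double-angle identities give $|1-t-s|=2\alpha\beta$ and $\rho_n^*(t)\approx N^{-2}+N^{-1}\gamma$; applying the formula $\sin A-\sin B=2\cos((A+B)/2)\sin((A-B)/2)$ to $\beta-\alpha$ in the cases $\theta\ge\theta_0$ and $\theta\le\theta_0$ separately yields
\[
0\le\beta-\alpha\le 2\gamma\qquad\text{for all }\theta\in[0,\pi].
\]
The bound $\beta\ge\alpha$ makes the auxiliary summand $F_N(\theta+\theta_0)$ harmless: combining $\tilde T\le 2F_N(0)$ with the Fej\'er decay and $\beta\ge\alpha$ gives $|Q(s)|\le\min\bigl(2,\,c/(N\alpha)^2\bigr)$.

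It remains to dominate $\min\bigl(2,\,c/(N\alpha)^2\bigr)$ by $c\rho_n^*(t)/(\rho_n^*(t)+2\alpha\beta)$. When $\alpha<1/N$, the estimate $2\alpha\beta\le 2\alpha(\alpha+2\gamma)\le 2N^{-2}+4\gamma/N\le c\rho_n^*(t)$ forces $|1-t-s|\le c\rho_n^*(t)$, so the right-hand side is bounded below by an absolute positive constant and $|Q|\le 2$ suffices. When $\alpha\ge 1/N$, the target inequality is equivalent to $\beta\le c'N^2\alpha\rho_n^*(t)$; using $\beta\le\alpha+2\gamma$ and $N^2\rho_n^*(t)\ge c''(1+N\gamma)$, it reduces to $\alpha+2\gamma\le c'''\alpha(1+N\gamma)$, which holds because $\alpha\ge 1/N$ gives $2\gamma\le 2N\alpha\gamma$.

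The main technical ingredient is the pair of elementary identities $\beta\ge\alpha$ and $\beta-\alpha\le 2\gamma$: these are exactly what let the symmetrization required to obtain an algebraic polynomial in $\cos\theta$ (rather than a mere shifted trigonometric kernel, which is not even in $\theta$) preserve the pointwise decay of $Q$, even when $\theta_0$ is small compared with $1/N$.
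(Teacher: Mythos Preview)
Your proof is correct and self-contained, whereas the paper simply invokes \cite[Lemma~6.1]{Di-Pr} without argument. The route you take---symmetrizing a shifted Fej\'er kernel $F_N(\theta-\theta_0)+F_N(\theta+\theta_0)$ and passing through $s=\cos\theta$---is the classical construction underlying such results, and your use of the elementary bounds $\beta\ge\alpha$ and $\beta-\alpha\le 2\gamma$ to control the symmetrization term is clean. The paper's citation buys brevity and avoids repeating a known fact; your direct argument buys independence from the reference and makes the estimate transparent. One small caveat: the case $N=\lfloor n/2\rfloor=0$ (i.e.\ $n=1$) should be noted separately, since then $F_0\equiv 1$ and the decay bound $c/(N\alpha)^2$ is vacuous; but $Q\equiv 1$ and $\rho_1^*(t)\ge 1$ make the required estimate trivial in that case.
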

\begin{proof}
	This is a partial case of~\cite[Lemma~6.1]{Di-Pr}.
\end{proof}

Using rotation, we get a good radial polynomial on an annulus.
\begin{lemma}\label{lem:lower basic polynomial}
For any $0<r_1<r_2$ let $D:=\{\x\in\R^2: r_1\le |\x|\le r_2 \}$. Then for any $\lambda\in[r_1,r_2]$ there exists $P\in\P_{n}$ such that
\begin{equation}\label{eqn:basic pol equals 1}
P(\y)=1  \qtq{whenever} |\y|=\lambda
\end{equation}  
and 
\begin{equation}\label{eqn:basic pol estimate}
|P(\y)|\le c(r_1,r_2)\frac{\rho_n^*(\lambda-r_1)}{\rho_n^*(\lambda-r_1)+|\lambda-|\y||}, \quad \y\in D.
\end{equation}
\end{lemma}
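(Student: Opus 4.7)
The plan is to realize $P$ as a radial polynomial of the form $P(\y)=Q(\phi(|\y|^2))$, where $Q$ is supplied by Lemma~\ref{lem:DPpol} and $\phi$ is a suitable affine substitution. Since $|\y|^2=y_1^2+y_2^2$ is a polynomial of degree $2$ and $\deg Q\le n/2$, we get $P\in\P_n$; by construction $P$ depends only on $|\y|$ (this is the role of the ``rotation''), so the value $1$ at a single point with $|\y|=\lambda$ propagates to the entire circle, which addresses~\eqref{eqn:basic pol equals 1} as soon as the peak of $Q$ is placed at $\phi(\lambda^2)$.

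I would take
\[
t:=\kappa(\lambda-r_1),\qquad \phi(u):=1-\frac{\kappa(u-r_1^2)}{\lambda+r_1},
\]
with $\kappa=\kappa(r_1,r_2)>0$ a small constant chosen below; the factor $\lambda+r_1$ in the denominator keeps $\phi$ well defined even at $\lambda=r_1$. Since $\lambda^2-r_1^2=(\lambda-r_1)(\lambda+r_1)$, one has $\phi(r_1^2)=1$ and $\phi(\lambda^2)=1-t$, so $P(\y)=Q(1-t)=1$ whenever $|\y|=\lambda$, proving~\eqref{eqn:basic pol equals 1}.

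To apply Lemma~\ref{lem:DPpol} I need $t\in[0,1]$ and $\phi(|\y|^2)\in[-1,1]$ for all $\y\in D$. Since $\phi$ is affine and decreasing in $u$ with $\phi(r_1^2)=1$, the latter condition reduces to $\phi(r_2^2)\ge -1$, i.e.\ $\kappa(r_2^2-r_1^2)\le 2(\lambda+r_1)$. Both are secured by taking $\kappa\le\min\bigl(1/(r_2-r_1),\,4r_1/(r_2^2-r_1^2)\bigr)$, using $\lambda+r_1\ge 2r_1$. A short algebraic manipulation then gives
\[
1-t-\phi(|\y|^2)=\frac{\kappa\,(|\y|+\lambda)}{\lambda+r_1}\,(|\y|-\lambda),
\]
and for $\y\in D$ the prefactor lies between two positive constants depending only on $r_1,r_2$ (since $|\y|,\lambda\in[r_1,r_2]$). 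Combined with the easy comparability $\rho_n^*(\kappa s)\approx\rho_n^*(s)$ with constants depending on $\kappa$, substituting $s=\phi(|\y|^2)$ into~\eqref{eqn:univariate pol estimate} delivers~\eqref{eqn:basic pol estimate}. The only genuine check is choosing $\kappa$ so that the hypotheses of Lemma~\ref{lem:DPpol} are met; I do not anticipate any deeper obstacle.
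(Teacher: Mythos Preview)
Your proof is correct and follows essentially the same route as the paper: both take $P(\y)=Q(\text{affine function of }|\y|^2)$ with $Q$ from Lemma~\ref{lem:DPpol}, then verify~\eqref{eqn:basic pol estimate} using $\rho_n^*(t)\approx c(r_1,r_2)\rho_n^*(\lambda-r_1)$ and $|1-t-s|\approx c(r_1,r_2)\,|\lambda-|\y||$. The only difference is cosmetic: the paper uses the canonical affine map $s=(r_2^2-|\y|^2)/(r_2^2-r_1^2)$ sending $[r_1^2,r_2^2]$ onto $[0,1]$, so no auxiliary parameter $\kappa$ is needed, whereas you introduce $\kappa$ and then bound it---this is a harmless detour.
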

\begin{proof}
	Set $t:=(\lambda^2-r_1^2)/(r_2^2-r_1^2)\in[0,1]$. Let $Q$ be the polynomial provided by Lemma~\ref{lem:DPpol}.
	We define $P(\x)=Q((r_2^2-|\x|^2)/(r_2^2-r_1^2))$, clearly $P$ satisfies~\eqref{eqn:basic pol equals 1}. 
	The inequality~\eqref{eqn:basic pol estimate} now follows from~\eqref{eqn:univariate pol estimate} using that $\rho_n^*(t)\approx c(r_1,r_2)\rho_n^*(\lambda-r_1)$.
\end{proof}

The radial structure from the previous lemma will carry too large overall $L_2$ norm, which can be rectified by multiplication by a good univariate polynomial. 
\begin{lemma}\label{lem:lower basic polynomial narrowed}
	For any $0<r_1<r_2$ let $D:=\{\x\in\R^2: r_1\le |\x|\le r_2 \}$. Then 
	\begin{equation*}
	\lambda_n(D,\x)\le c(r_1,r_2) n^{-1} \rho_n^*(|\x|-r_1) \qtq{for any} \x\in D.
	\end{equation*}
\end{lemma}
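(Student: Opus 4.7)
By the rotational symmetry of the annulus $D$ together with~\eqref{eqn:affine}, we may reduce to the case $\x=(\lambda,0)$ with $\lambda:=|\x|\in[r_1,r_2]$. Following the suggestion in the paragraph preceding the lemma, the idea is to multiply the radial polynomial $P$ from Lemma~\ref{lem:lower basic polynomial} (whose $L_2(D)$ mass is only $\approx\rho_n^*(\lambda-r_1)$, off by a factor of $n$) by a univariate polynomial that cuts the angular mass by the missing $n^{-1}$. The crucial design choice is that the univariate variable be $y_2$, the coordinate transverse to $\x$, rather than the radial coordinate $y_1$.

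\textbf{Construction.} Apply Lemma~\ref{lem:lower basic polynomial} with $n$ replaced by $\lfloor n/2\rfloor$ to obtain $P\in\P_{\lfloor n/2\rfloor}$ with $P\equiv 1$ on $\{|\y|=\lambda\}$ and
\[
|P(\y)|\le c(r_1,r_2)\,\frac{\rho}{\rho+\bigl||\y|-\lambda\bigr|},\qquad \y\in D,\qquad \rho:=\rho_{\lfloor n/2\rfloor}^*(\lambda-r_1)\approx\rho_n^*(\lambda-r_1).
\]
Apply Lemma~\ref{lem:DPpol} with $t=1$ (so the peak $Q(0)=1$ sits at the left end of $[-1,1]$) to obtain a univariate polynomial $Q$ of degree $\lceil n/2\rceil$ with $|Q(s)|\le c/(1+n|s|)$ on $[-1,1]$. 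Set $R(y_2):=Q(y_2/r_2)$ and
\[
f(\y):=P(\y)\,R(y_2)\in\P_n,\qquad f(\x)=P(\x)\,R(0)=1.
\]

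\textbf{Estimating $\|f\|_{L_2(D)}^2$.} Substituting the pointwise bounds for $P$ and $R$ and passing to polar coordinates $\y=(r\cos\theta,r\sin\theta)$, Fubini reduces the task to
\[
\|f\|_{L_2(D)}^2\le c\int_{r_1}^{r_2}\frac{\rho^2\,r\,J(r)}{(\rho+|r-\lambda|)^2}\,dr,\qquad J(r):=\int_0^{2\pi}\frac{(r_2/n)^2\,d\theta}{(r_2/n+r|\sin\theta|)^2}.
\]
The integrand of $J(r)$ is concentrated near the two zeros $\theta=0,\pi$ of $\sin\theta$; near each zero, $|\sin\theta|\ge c|\theta|$ (resp.\ $c|\theta-\pi|$), and substituting $u=nr\theta/r_2$ (resp.\ $u=nr(\theta-\pi)/r_2$) reduces each lobe to $\approx(r_2/(nr))\int_0^\infty(1+u)^{-2}du$, giving $J(r)\le c\,r_2/(nr)$. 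Combined with the standard one-dimensional estimate $\int_{r_1}^{r_2}\rho^2/(\rho+|r-\lambda|)^2\,dr\le 2\rho$, this yields $\|f\|_{L_2(D)}^2\le c(r_1,r_2)\,n^{-1}\rho_n^*(\lambda-r_1)$, and~\eqref{eqn:def_lambda} concludes the proof.

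\textbf{Main obstacle.} The key point is the choice of $y_2$ over $y_1$ as the cutoff direction. A univariate bump peaked at $y_1=\lambda$ has width at best $1/n$ when $\lambda$ lies in the middle of $[-r_2,r_2]$, which translates to the much larger arc length $\sim\sqrt{\lambda/n}$ on the circle $|\y|=\lambda$ and yields only a $1/\sqrt n$ improvement over Lemma~\ref{lem:lower basic polynomial}. Placing the bump at $y_2=0$ instead inevitably forces $f$ also to equal $1$ at the antipodal point $(-\lambda,0)$, but this is harmless since only the constraint $f(\x)=1$ matters; each of the two resulting angular lobes of $J(r)$ has arc length $\sim 1/n$ on the circle, precisely supplying the missing factor $n^{-1}$. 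The remaining work is then just the Fubini-plus-substitution bookkeeping described above.
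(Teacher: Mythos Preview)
Your proof is correct and follows essentially the same approach as the paper's own proof: reduce by rotation to $\x=(\lambda,0)$, take the radial polynomial $P$ from Lemma~\ref{lem:lower basic polynomial} of degree $\le n/2$, multiply it by $Q(y_2/r_2)$ with $Q$ from Lemma~\ref{lem:DPpol} at $t=1$, and bound $\|f\|_{L_2(D)}^2$ in polar coordinates by separately estimating the angular integral ($\le c(r_1,r_2)n^{-1}$) and the radial integral ($\le c\,\rho_n^*(\lambda-r_1)$). Your additional paragraph explaining why the cutoff must be in the $y_2$-direction rather than $y_1$ is a nice piece of exposition that the paper does not include.
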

\begin{proof}
Due to~\eqref{eqn:affine}, it is enough to show for any fixed $\lambda\in[r_1,r_2]$ that $$\lambda_n(D,(\lambda,0))\le c(r_1,r_2)n^{-1}\rho_n^*(\lambda-r_1),$$ or, due to~\eqref{eqn:def_lambda}, that there exists $\wt P\in\P_{n}$ satisfying 
\[\wt P(\lambda,0)=1 \qtq{and} \|\wt P\|^2_{L_2(D)}\le c(r_1,r_2)n^{-1}\rho_n^*(\lambda-r_1).\]

Let $P\in \P_{n/2}$ be the polynomial provided by Lemma~\ref{lem:lower basic polynomial} satisfying~\eqref{eqn:basic pol equals 1} and~\eqref{eqn:basic pol estimate}. Let $Q$ be the polynomial supplied by Lemma~\ref{lem:DPpol} for $t=1$. We define $\wt P(x_1,x_2):=P(x_1,x_2)Q(\tfrac{x_2}{r_2})$. Then clearly $\wt P(\lambda,0)=1$ and we need to show that
\begin{equation*}
\int_D (P(x_1,x_2))^2(Q(\tfrac{x_2}{r_2}))^2\,dx_1dx_2 \le c(r_1,r_2)n^{-1}\rho_n^*(\lambda-r_1).
\end{equation*}
By~\eqref{eqn:univariate pol estimate} and~\eqref{eqn:basic pol estimate}, using polar coordinates, it is sufficient to prove that
\begin{equation*}
\int_{r_1}^{r_2} \left( \frac{\rho_n^*(\lambda-r_1)}{\rho_n^*(\lambda-r_1)+|\lambda-r|} \right)^2
\int_0^{2\pi}
\left(\frac{\tfrac1n}{\tfrac1n+|\tfrac{r\sin\theta}{r_2}|} \right)^2 d\theta \, r\, dr
\le c(r_1,r_2)n^{-1}\rho_n^*(\lambda-r_1).
\end{equation*}
Observe that $r\approx c(r_1,r_2)$ for any $r\in[r_1,r_2]$. With $I:=\{\theta\in[0,2\pi]: |\sin\theta|\le\tfrac1n \}$ we have
\begin{align*}
\int_0^{2\pi}
\left(\frac{\tfrac1n}{\tfrac1n+|\tfrac{r\sin\theta}{r_2}|} \right)^2 d\theta
& \le c(r_1,r_2) \left[ \int_{I} d\theta + \int_{[0,2\pi]\setminus I}  \left(\frac{\tfrac1n}{|\sin\theta|} \right)^2 d\theta \right] \le c(r_1,r_2) n^{-1},
\end{align*}
for any $r\in[r_1,r_2]$. Therefore, it remains to show that 
\begin{equation*}
\int_{r_1}^{r_2} \left( \frac{\rho_n^*(\lambda-r_1)}{\rho_n^*(\lambda-r_1)+|\lambda-r|} \right)^2
dr
\le c(r_1,r_2)\rho_n^*(\lambda-r_1),
\end{equation*}
which can be done using the same idea as for the integral w.r.t. $\theta$, namely, splitting the integral as $\int_J\dots+\int_{[r_1,r_2]\setminus J}\dots$, where $J=\{r\in[r_1,r_2]:|\lambda-r|\le\rho_n^*(\lambda-r_1) \}$. We omit the details as they are the same as in~\eqref{eqn:fi bound} from the proof of Lemma~\ref{lem:lower prod of two basic}.
\end{proof}

The final lemma provides a good polynomial on the intersection of two annuli obtained by multiplication of two polynomials provided by Lemma~\ref{lem:lower basic polynomial}.
\begin{lemma}\label{lem:lower prod of two basic}
	Let $\zeta>0$, $\zeta\le h\le 1-\zeta$, $\c_i:=(-1)^i(0,r_1h)$, $0<r_1<r_2$, $D_i:=\{\x\in\R^2: r_1\le |\x-\c_i|\le r_2 \}$, $i=1,2$, $D:=D_1\cap D_2$. Suppose $\x\in D$ is such that $|\x-\c_i|\le (1+\tfrac\zeta2)r_1$, $i=1,2$. Then
	\begin{equation*}
	\lambda_n(D,\x) \le c(\zeta,r_1,r_2) \rho_n^*(|\x-\c_1|-r_1) \rho_n^*(|\x-\c_2|-r_1).
	\end{equation*}
\end{lemma}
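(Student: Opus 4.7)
The plan is to take $P := P_1 P_2$ as test polynomial, where each $P_i \in \P_{n/2}$ is the radial ``needle'' polynomial supplied by Lemma~\ref{lem:lower basic polynomial} applied to the annulus $D_i$ with parameter $\lambda = \lambda_i := |\x - \c_i|$ and degree $n/2$. Then $\deg P \le n$, $P(\x) = P_1(\x) P_2(\x) = 1$, and
\[
|P(\y)|^2 \le c(r_1,r_2) \prod_{i=1,2} \left(\frac{\rho_i}{\rho_i + |\,|\y - \c_i| - \lambda_i\,|}\right)^2, \qquad \rho_i := \rho_n^*(\lambda_i - r_1).
\]
By~\eqref{eqn:def_lambda} it suffices to show $\|P\|_{L_2(D)}^2 \le c(\zeta, r_1, r_2)\rho_1 \rho_2$.

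To estimate this $L_2$ norm, I would pass to polar coordinates $(u,\theta)$ centered at $\c_1$: $\y = \c_1 + u(\cos\theta, \sin\theta)$, so $|\y - \c_1| = u$, $dy_1\,dy_2 = u\,du\,d\theta$, and a direct computation yields $|\y - \c_2|^2 = v(u,\theta)^2 := u^2 + 4r_1^2 h^2 - 4 u r_1 h \sin\theta$. Fix $\eta = c_0(\zeta) r_1$ with $c_0(\zeta) \approx \zeta^2$ (selected below) and split the admissible region $\{\y \in D\}$ into three pieces: (i) $\{|u - \lambda_1| > \eta\}$; (ii) $\{|u - \lambda_1| \le \eta,\ |v - \lambda_2| > \eta\}$; (iii) the \emph{core} $\{|u - \lambda_1| \le \eta,\ |v - \lambda_2| \le \eta\}$. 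On (i), $(\rho_1/(\rho_1 + |u - \lambda_1|))^2 \le \rho_1^2/\eta^2$ and $\int_D P_2^2\,d\y \le c(r_1, r_2)\rho_2$ (the latter by polar coordinates at $\c_2$ and the 1D estimate $\int_{r_1}^{r_2}(\rho_2/(\rho_2 + |v - \lambda_2|))^2 dv \le c\rho_2$, as in the proof of Lemma~\ref{lem:lower basic polynomial narrowed}); since $\rho_1 \le c(\zeta, r_1)$ is bounded and $\eta^2 \approx \zeta^4 r_1^2$, the contribution of (i) is $\le c(\zeta, r_1, r_2) \rho_1 \rho_2$. Part (ii) is symmetric, bounding $(\rho_2/(\rho_2 + |v - \lambda_2|))^2 \le \rho_2^2/\eta^2$ and using $\int d\theta \le 2\pi$ together with $\int_{r_1}^{r_2}(\rho_1/(\rho_1 + |u - \lambda_1|))^2 u\,du \le c\rho_1$.

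The core (iii) is the main step. Change variables from $\theta$ to $v$: since $\sin\theta = (u^2 + 4 r_1^2 h^2 - v^2)/(4 u r_1 h)$, the map $\theta \mapsto v$ is $2$-to-$1$ with $d\theta = v/(2 u r_1 h |\cos\theta|)\,dv$. The crux---and the main obstacle---is to show $|\cos\theta(u, v)| \ge c(\zeta) > 0$ throughout the core. Since $\c_1, \c_2$ lie on the $y_2$-axis, $y_1 = u \cos\theta$, so at the peak $(u, v) = (\lambda_1, \lambda_2)$ (where $\y = \x$) one has $|\cos\theta_0| = |x_1|/\lambda_1$. A direct computation gives $x_1^2 = (\lambda_1^2 + \lambda_2^2)/2 - (\lambda_1^2 - \lambda_2^2)^2/(16 r_1^2 h^2) - r_1^2 h^2$, and combining $\lambda_i - r_1 \le \zeta r_1/2$ with $\zeta \le h \le 1 - \zeta$ yields $x_1^2 \ge c(\zeta) r_1^2$, hence $|\cos\theta_0| \ge c(\zeta)$. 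Then the bounds $|\partial \sin\theta/\partial u|, |\partial \sin\theta/\partial v| \le c/(r_1 \zeta)$ imply $|\sin\theta - \sin\theta_0| \le c \eta/(r_1 \zeta)$ throughout the core, and choosing $c_0(\zeta)$ small enough in terms of $\zeta$ secures $|\cos\theta| \ge c(\zeta)/2$ there. With this in hand, $v/(2 u r_1 h |\cos\theta|) \le c(\zeta, r_1, r_2)$ is bounded; extending the $(u,v)$-integral to the product region $[\lambda_1 - \eta, \lambda_1 + \eta] \times [\lambda_2 - \eta, \lambda_2 + \eta]$ and applying the 1D estimate in each variable yields the core contribution $\le c(\zeta, r_1, r_2) \rho_1 \rho_2$. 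Summing (i), (ii), (iii) completes the proof. The hypotheses $0 < \zeta \le h \le 1 - \zeta$ and $|\x - \c_i| \le (1 + \zeta/2) r_1$ are invoked precisely to keep the peak of $|P|^2$ bounded away from the locus $y_1 = 0$, where the Jacobian of $\y \mapsto (|\y - \c_1|, |\y - \c_2|)$ degenerates.
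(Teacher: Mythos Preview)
Your argument is correct and rests on the same core idea as the paper's: build $P=P_1P_2$ from the radial needles of Lemma~\ref{lem:lower basic polynomial} and control $\int_D|P|^2$ by passing to the coordinates $(u,v)=(|\y-\c_1|,|\y-\c_2|)$ where the Jacobian is nondegenerate. The difference is in the decomposition. The paper splits $D_+$ according to the distance to the degeneracy locus: on $D^*=\{y_1\ge\zeta'\}$ it applies the global change of variables $\Phi(\y)=(|\y-\c_1|,|\y-\c_2|)$, whose Jacobian $2y_1h/(|\y-\c_1||\y-\c_2|)$ is bounded below there, and then shows that the residual strip $\{0\le y_1<\zeta'\}$ lies entirely in $\{|v-\lambda_2|\ge\zeta/2\}$ (or its reflection), where one factor is uniformly small. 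You instead localize around the peak in $(u,v)$, proving that the hypotheses $\zeta\le h\le 1-\zeta$ and $|\x-\c_i|\le(1+\tfrac\zeta2)r_1$ force $|x_1|\ge c(\zeta)r_1$, so that the Jacobian is good on a small core $\{|u-\lambda_1|\le\eta,\ |v-\lambda_2|\le\eta\}$; the off-core pieces are then handled by the crude bound $\rho_i^2/\eta^2$ on one factor and a one-dimensional $L^2$ estimate on the other. The paper's split is slightly more geometric and avoids introducing the auxiliary scale $\eta$; your split makes the role of the constraint on $\x$ more transparent (it is used precisely to keep the peak away from the Jacobian singularity). Either way the bookkeeping is routine and the conclusions coincide.
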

\begin{proof}
	Due to~\eqref{eqn:affine}, for simplicity, we will assume that $r_1=1$ and $r=\tfrac{r_2}{r_1}=r_2>1$. Fix $\x\in D$, denote $t_i:=|\x-\c_i|-1$. We apply Lemma~\ref{lem:lower basic polynomial} to find a polynomial $P_i\in\P_{n/2}$ such that 
	\begin{equation}\label{eqn:Pi value 1}
	P_i(\y)=1 \qtq{whenever} |\y-\c_i|=t_i+1
	\end{equation}
	and
	\begin{equation}\label{eqn:Pi props}
	|P_i(\y)|\le c(r) \frac{\rho_n^*(t_i)}{\rho_n^*(t_i)+|t_i+1-|\y-\c_i||}.
	\end{equation}
	We will prove that $P(\y):=P_1(\y)P_2(\y)$ is a required polynomial (see~\eqref{eqn:def_lambda}), namely, it satisfies $P(\x)=1$ and $\|P\|_{L_2(D)}^2\le c(\zeta,r) \rho_n^*(t_1)\rho_n^*(t_2)$. The equality $P(\x)=1$ is immediate by~\eqref{eqn:Pi value 1}. Due to~\eqref{eqn:Pi props}, we need to show that 
	\begin{align*}
	\int_D \frac{\rho_n^*(t_1)}{(\rho_n^*(t_1)+|t_1+1-|\y-\c_1||)^2} \cdot \frac{\rho_n^*(t_2)}{(\rho_n^*(t_2)+|t_2+1-|\y-\c_2||)^2} \, d\y &=:\int_D f_1(\y)\cdot f_2(\y)\,d\y \\ &\le c(\zeta,r).
	\end{align*}
	Observing that the integrand is symmetric about the second coordinate axis (containing both $\c_i$), it is sufficient to estimate the integral over $D_+:=\{\y=(y_1,y_2)\in D:y_1\ge0\}$, which will be split into three parts. First, let $D^*:=\{\y=(y_1,y_2)\in D: y_1\ge \zeta'\}$, where $\zeta'=\min\{\tfrac{\sqrt{1-\zeta^2}}{2},\zeta\}$. Consider the mapping $\Phi:D^*\to[1,r]^2$ defined by $\Phi(y_1,y_2):=(|\y-\c_1|,|\y-\c_2|)$. It is easily seen that $\Phi$ is injective on $D^*$. We also have that $\Phi$ is continuously differentiable on $D^*$ and its Jacobian determinant $J_\Phi$ satisfies 
	\begin{equation*}
	J_\Phi(y_1,y_2)=\frac{2y_1h}{|\y-\c_1||\y-\c_2|}\ge \frac{2 \zeta' \zeta}{r^2}  =: c_2(\zeta,r)
	\end{equation*}
	on $D^*$. Therefore, with $\u=(u_1,u_2)$ and $\ds\tilde f_i(u_i)=\frac{\rho_n^*(t_i)}{(\rho_n^*(t_i)+|t_i+1-u_i|)^2}$, we have
	\begin{align*}
	\int_{D^*} f_1(\y)\cdot f_2(\y)\,d\y & = 
	\int_{\Phi(D^*)} \tilde f_1(u_1) \cdot \tilde f_2(u_2) \frac{1}{J_\Phi(\Phi^{-1}(u_1,u_2))} \, d\u \\
	& \le \frac1{c_2(\zeta,r)}  \int_1^r \tilde f_1(u_1) \,du_1 \cdot \int_1^r \tilde f_2(u_2) \,du_2.
	\end{align*}
	Now let $I_i:=\{u_i\in[1,r]: |t_i+1-u_i|\le \rho_n^*(t_i) \}$. We obtain
	\begin{align}
	\int_1^r \tilde f_i(u_i) \,du_i &=
	\int_{I_i}  \tilde f_i(u_i) \,du_i + \int_{[1,r]\setminus I_i}  \tilde f_i(u_i) \,du_i \nonumber \\
	&\le \int_{I_i}  \frac1{\rho_n^*(t_i)} \,du_i + 2\int_{\rho_n^*(t_i)}^\infty \frac{\rho_n^*(t_i)}{(2s)^2}\, ds \label{eqn:fi bound} \\
	&\le 2 + \frac12, \nonumber
	\end{align}
	so $\int_{D^*} f_1(\y)\cdot f_2(\y)\,d\y \le c(\zeta,r)$. It remains to prove that $\int_{D_+\setminus D^*} f_1(\y)\cdot f_2(\y)\,d\y \le c(\zeta,r)$. Note that for $J:=[\sqrt{1-\zeta'^2}+h , r+h]$ we have
	$
	D_+\setminus D^* \subset [0, \zeta']\times ((-J)\cup J).
	$
	With fixed $y_1\in [0,\zeta']$, first we will show
	\begin{equation}\label{eqn:int J}
	\int_{J} f_1(y_1,y_2)\cdot f_2(y_1,y_2)\,dy_2 \le c(\zeta,r).
	\end{equation} 
	For $y_2\in J$ we have 
\begin{align*}
	|\y-\c_1| &\ge y_2+h\ge \sqrt{1-\zeta'^2}+2h\ge \sqrt{1-\zeta^2}+2\zeta \\
	&>\sqrt{(1-\zeta)^2}+2\zeta=1+\zeta\ge |\x-\c_1|+\tfrac\zeta2 = t_1+1+\tfrac\zeta2,
\end{align*}
so
	\begin{equation}\label{eqn:f1 bound}
	f_1(y_1,y_2)\le \frac{\rho_n^*(t_1)}{(|\y-\c_1|-t_1-1)^2}\le \frac{\rho_n^*(t_1)}{(\tfrac\zeta2)^2}\le c(\zeta).
	\end{equation}
	We further observe that for $y_2\in J$ one has 
\begin{equation*}
	\sqrt{1-\zeta'^2}\le y_2-h\le |\y-\c_2|\le y_1+(y_2-h)\le \zeta'+r
\end{equation*}	
implying 
	\begin{equation*}
	\frac{\partial}{\partial y_2}|\y-\c_2|=\frac{y_2-h}{|\y-\c_2|}\ge \frac{\sqrt{1-\zeta'^2}}{r+\zeta'}=:c_3(\zeta,r)
	\qtq{and}
	\frac{\partial}{\partial y_2}|\y-\c_2|\le \frac{r}{\sqrt{1-\zeta'^2}}=:c_4(\zeta,r), 
	\end{equation*}
	so
	\begin{equation}\label{eqn:lip}
	||(y_1,y_2)-\c_2|-|(y_1,y_2')-\c_2||\approx c(\zeta,r) |y_2-y_2'|
	\qtq{for any}
	y_2,y_2'\in J.
	\end{equation}
	Now denote $J_2:=\{ y_2\in J : |t_2+1-|\y-\c_2||\le \rho_n^*(t_2) \}$, by~\eqref{eqn:lip} we have $\meas_1(J_2)\approx c(\zeta,r)\rho_n^*(t_2)$,
	and so
	\begin{equation*}
	\int_{J_2} f_2(y_1,y_2)\,dy_2 \le \int_{J_2} \frac1{\rho_n^*(t_2)} \,dy_2 \le c(\zeta,r).
	\end{equation*}
	Let $y_2'\in J$ be such that $|(y_1,y_2')-\c_2|=t_2+1$. Using~\eqref{eqn:lip}, we have with sufficiently small $c_5(\zeta,r)$ that 
	\begin{align*}
	\int_{J\setminus J_2} f_2(y_1,y_2)\,dy_2 &\le
	\frac14\int_{J\setminus J_2} \frac{\rho_n^*(t_2)}{(t_2+1-|\y-\c_2|)^2} \,dy_2
	\le c(\zeta,r)  \int_{J\setminus J_2} \frac{\rho_n^*(t_2)}{(y_2'-y_2)^2} \,dy_2 \\
	&\le c(\zeta,r) \int_{\{y_2\in J:|y_2'-y_2|\ge  c_5(\zeta,r) \rho_n^*(t_2) \}} \frac{\rho_n^*(t_2)}{(y_2'-y_2)^2} \,dy_2 \\
	&\le c(\zeta,r) \int_{\rho_n^*(t_2)}^\infty \frac{\rho_n^*(t_2)}{s^2}\, ds = c(\zeta,r).
	\end{align*}
	In summary, $\int_J f_2(y_1,y_2)\,dy_2\le c(\zeta,r)$, and taking~\eqref{eqn:f1 bound} into account, we obtain~\eqref{eqn:int J}. Note that~\eqref{eqn:int J} is valid if $J$ is replaced with $-J$ by following essentially the same proof, namely, one simply interchanges $i=1$ and $i=2$ observing that $|(y_1,y_2)-\c_i|=|(y_1,-y_2)-\c_{3-i}|$. Therefore 
	\begin{equation*}
	\int_{D_+\setminus D^*} f_1(\y)\cdot f_2(\y)\,d\y
	\le
	\int_{[0,\zeta']\times((-J)\cup J)} f_1(\y)\cdot f_2(\y)\,d\y \le c(\zeta,r),
	\end{equation*}
	and the proof is complete.
	\end{proof}

\section{Proof of the main result}

\begin{proof}[Proof of Theorem~\ref{thm:main}]

Denote $d_i(\x):=d(\x,\Gamma_i)$, $i=1,\dots,m$, and $d_j^{\pm}(\x):=d(\x,\Gamma_j^\pm)$, $j=1,\dots,k$. Observe that for every $j$ and choice of $\pm$, there exists $i$ such that $\Gamma_j^\pm\subset \Gamma_i$ and then $d_j^\pm(\x)\ge d_i(\x)$.

We consider three cases depending on a parameter $\delta=\delta(D)>0$ which will be selected later.

Case~1: $d_i(\x)\ge \tfrac\delta4$ for all $i=1,\dots,m$. Then $\tfrac\delta4 B+\x\subset D$, so by~\eqref{eqn:compare}, \eqref{eqn:affine} and~\eqref{eqn:ball}
\begin{align*}
\lambda_n(D,\x) \ge \lambda_n(\tfrac\delta4 B+\x,\x)=\tfrac{\delta^2}{16} \lambda_n(B,(0,0))\approx c(\delta) n^{-2}.
\end{align*}
In the other direction, let $R:=\diam(D)$, then $D\subset RB+\x$ and again  by~\eqref{eqn:compare}, \eqref{eqn:affine} and~\eqref{eqn:ball}
\begin{align*}
\lambda_n(D,\x) \le \lambda_n(R B+\x,\x)=R^2 \lambda_n(B,(0,0))\approx c(R) n^{-2}.
\end{align*}
We complete Case~1 by observing that
\begin{equation*}
\min\Big(\min_{1\le i\le m} n^{-1}\rho_n^*(d_i(\x)) , \min_{1\le j\le k} \rho_n^*(d_j^-(\x)) \rho_n^*(d_j^+(\x)) \Big)\approx c(\delta,R) n^{-2}
\end{equation*}
due to $d_i, d_j^-, d_j^+ \in [\tfrac\delta4,R]$ for any $i,j$.

Case~2: $\min_{1\le i\le m}d_i(\x)=d_{i_0}(\x)<\tfrac\delta4$ for some $i_0$ and $d(\x,\{\v_j\}_{j=1}^k)\ge\delta$. Let $\Gamma_i'$ be the curve obtained by removing open $\tfrac{\delta}{4}$-neighborhoods of every corner point from $\Gamma_i$. Then $\Gamma_i'$ is either a closed $C^2$ curve without corner points (if nothing was removed from $\Gamma_i$), in which case we set $\Gamma_i'':=\Gamma_i'$, or $\Gamma_i'$ can be extended to a closed $C^2$ curve $\Gamma_i''$ without corner points (we can use the connected component of $\partial D$ containing $\Gamma_i'$ and modify $\partial D$ to be $C^2$ smooth in arbitrarily small neighborhood of each corner point). Let $r(\Gamma_i'')$ be a radius fulfilling the rolling disc property for $\Gamma_i''$. With
\begin{equation*}
r_0:=\min\big(\min_{1\le i\le m} r(\Gamma_i''),\tfrac13 \min_{1\le i_1<i_2\le m} d(\Gamma_{i_1}',\Gamma_{i_2}') \big)
\end{equation*}
and $\Gamma':=\cup_{i=1}^m \Gamma_i'$ we have the following extended rolling disc property:
\begin{equation}\label{eqn:case 2 rolling disc}
B_\pm(r,\y,\Gamma')\cap \Gamma'=\{\y\}
\qtq{for any} 0<r\le r_0
\qtq{and any} \y\in\Gamma'.
\end{equation}
When $\delta$ is selected, we will impose
\begin{equation}\label{eqn:r0 vs delta new}
{\frac{\delta}4}\le r_0.
\end{equation}
Let $\y\in\Gamma_{i_0}$ be a point such that $|\x-\y|=d(\x,\Gamma_{i_0})$, by $d(\x,\{\v_j\}_{j=1}^k)\ge\delta$ clearly $\y\in\Gamma'$ and $\x-\y$ is orthogonal to $\Gamma_{i_0}$ at $\y$. We have~\eqref{eqn:case 2 rolling disc} with $r=\tfrac{\delta}{4}$, but we claim that we have even stronger $B_\pm(\tfrac{\delta}{4},\y,\partial D)\cap \partial D=\{\y\}$. Indeed, if $\z\in\partial D\setminus \Gamma'$, then $d(\z,\{\v_j\}_{j=1}^k)<\tfrac{\delta}4$ and $|\y-\z|\ge|\x-\z|-|\x-\y|> \tfrac{3\delta}{4} - \tfrac{\delta}{4}=\diam(B_\pm(\tfrac{\delta}{4},\y,\partial D))$, as required.

Clearly $B_\pm(\tfrac{\delta}{4},\y,\partial D)\cap \partial D=\{\y\}$ implies $B_-(\tfrac\delta4,\y,\partial D)\subset D$. Let $\u$ be the outward unit normal vector to $\partial D$ at $\y$. By~\eqref{eqn:compare}, \eqref{eqn:affine} and~\eqref{eqn:ball}
\begin{align*}
\lambda_n(D,\x) & \ge \lambda_n(B_-(\tfrac\delta4,\y,\partial D),\x)=\tfrac{\delta^2}{16} \lambda_n(B,(1-\tfrac{d_{i_0}(\x)}{\delta/4})\u) \\ 
& \approx c(\delta) n^{-1} \rho_n^*(d_{i_0}(\x)).
\end{align*}
For the other direction, we have that $B_+(\tfrac\delta4,\y,\partial D) \cap \intr(D)=\emptyset$. Further, (recall that $R=\diam(D)$) $D\subset 2R B +\y+\tfrac\delta4\u$, so for
\begin{equation*}
\wt D:=\{\z\in\R^2: \tfrac\delta4 \le |\y+\tfrac\delta4\u-\z|\le 2R \}
\end{equation*}
we have $D\subset \wt D$ and by~\eqref{eqn:compare} and Lemma~\ref{lem:lower basic polynomial narrowed}
\begin{equation*}
\lambda_n(D,\x) \le \lambda_n(\wt D,\x) \le c(\delta, R) n^{-1} \rho_n^*(d_{i_0}(\x)).
\end{equation*}
To arrive at~\eqref{eqn:main} we need to justify that for any $j$ 
\begin{equation}\label{eqn:case2 final bound}
\rho_n^*(d_j^-(\x)) \rho_n^*(d_j^+(\x)) \ge c(D) n^{-1} \rho_n^*(d_{i_0}(\x)).
\end{equation}
Indeed, since $\Gamma_j^+\cap\Gamma_j^-=\{\v_j\}$ there exists $\delta'>0$ depending on $D$ (and $\delta$) such that $\max\{d_j^+(\z),d_j^-(\z)\}<\delta'$ implies $d(\z,\{\v_j\}_{j=1}^k)<\tfrac\delta4$. Therefore $\max\{d_j^+(\x),d_j^-(\x)\}\ge \delta'$, but we also have $\min\{d_j^+(\x),d_j^-(\x)\}\ge d(\x,\partial D)=d_{i_0}(\x)$, so~\eqref{eqn:case2 final bound} follows.

Case~3: $d(\x,\{\v_j\}_{j=1}^k)<\delta$. This is the main case which requires some preparation.

For every $j$ and each choice of $\pm$ we let $\Gamma_j^{\pm*}$ be some local linear extension of $\Gamma_j^\pm$ satisfying $(\Gamma_j^{\pm*}\setminus \Gamma_j^\pm)\cap D=\emptyset$, which is possible to achieve since all interior angles are less than $\pi$. Such a choice of local linear extensions depends only on $D$. Every $\Gamma_j^{\pm*}$ can be extended to a closed $C^2$ curve, so by the rolling disc property,  
\begin{equation}\label{eqn:rolling disc for extensions}
B_\pm(r,\y,\Gamma_j^{\pm*})\cap \Gamma_j^{\pm*}=\{\y\}
\qtq{for any} 0<r\le \tilde r
\qtq{and any} \y\in\Gamma_j^{\pm*},
\end{equation}
where $\tilde r$ is the smallest radius fulfilling the rolling disc property for all (finitely many) curves $\Gamma_j^{\pm*}$. Similarly to Case~2, we will impose
\begin{equation}\label{eqn:r0 vs delta}
\gamma\delta\le \tilde r
\end{equation}
with some $\gamma=\gamma(D)\ge 2$ which will be selected later.

Next we describe and establish certain properties (i)--(iv) of $\partial D$ and $\Gamma_j^{\pm*}$ required from the choice of $\delta$. 



{\sl Property~(i).} Set $\delta_1=\delta_1(D)$ to be the smallest length of the line segments  $\Gamma_j^{\pm*}\setminus \Gamma_j^{\pm}$ and of the curves $\Gamma_j^{\pm}$ (over all $j$ and choices of $\pm$). Let $\y_{\pm}(\x)$ be a point from $\Gamma_j^{\pm*}$ such that $d(\x,\Gamma_j^{\pm*})=|\x-\y_{\pm}(\x)|$. If $\delta\le\delta_1$, the above implies that for $\x$ in $\delta$-neighborhood of $\v_j$ the point $\y_{\pm}(\x)$ is not an endpoint of $\Gamma_{j}^{\pm*}$ so that $\x-\y_{\pm}(\x)$ is orthogonal to the unit tangent vector of $\Gamma_{j}^{\pm}$ at $\y_{\pm}(\x)$. Moreover, $\ds|\y_{\pm}(\x)-\v_j|\le \delta$.

{\sl Property~(ii).} There exists $\delta_2=\delta_2(D)$ such that for any $\delta\le\delta_2$ the $(\gamma+1)\delta$-neighborhood $U$ of $\v_j$ satisfies $U \cap \Gamma_{j'}^{\pm*} =\emptyset$ for $j'\ne j$ (possible by $(\Gamma_{j'}^{\pm*}\setminus \Gamma_{j'}^\pm)\cap D=\emptyset$) and $U \cap \partial D \subset \Gamma_j^+\cup\Gamma_j^-$.

For properties~(iii) and~(iv) we assume that $\delta\le\min\{\delta_1,\delta_2\}$ and $\x$ is in $\delta$-neighborhood of $\v_j$ for an arbitrary $j$.

{\sl Property~(iii).} Let $\u_\pm(\y)$ be the unit normal vector to $\Gamma_j^{\pm*}$ at $\y\in\Gamma_j^{\pm*}$ chosen in continuous manner so that $\u_\pm(\y)$ points outward of $D$ when $\y\in\partial D$. Since $D$ is a $C^2$ domain, the angle between $\u_\pm(\y)$ and $\u_\pm(\y')$ does not exceed $c(D)|\y-\y'|$, for any $\y,\y'\in\Gamma_j^{\pm*}$. 
Combining this with property~(i), we can ensure that the angle between $\u_\pm(\y_\pm(\x))$ and $\u_\pm(\v_j)$ is less than $\tfrac\eps3$ whenever $\delta\le\delta_3=\delta_3(D)$, where $\ds\eps=\min_j \{\alpha_j,\pi-\alpha_j\}$ (recall that $\alpha_j$ is the interior angle of $D$ at $\v_j$).  Consequently, the angle between $\u_{+}(\y_{+}(\x))$ and $\u_{-}(\y_{-}(\x))$ is at least $\tfrac{\eps}{3}$ and at most $\pi-\tfrac\eps3$.

{\sl Property~(iv).} There exists $\delta_4=\delta_4(D)$ such that for any $\delta\le\delta_4$ 
\begin{equation*}
\intr(B_+(\gamma\delta,\y_{\pm}(\x),\Gamma_{j}^{\pm*})) \cap \Gamma_j^\mp =\emptyset
\end{equation*}
which is rather obvious by $0<\alpha_i<\pi$. (We make the same choice of either top or bottom sign in each $\pm$ or $\mp$.) 

We take $\delta:=\min\{\delta_1,\dots,\delta_4,4r_0,\tfrac{\tilde r}{\gamma}\}$, where $\gamma=\gamma(\eps)$ will be selected later and then~\eqref{eqn:r0 vs delta new}, \eqref{eqn:r0 vs delta} and properties~(i)--(iv) are satisfied.

Now we are ready for the proof. We let $d_\pm^*(\x):=d(\x,\Gamma_j^{\pm*})$ where $j$ is such that $d(\x,\{\v_j\}_{j=1}^k)=|\x-\v_j|$. 
We have $\y_\pm(\x)\in\Gamma_j^{\pm*}$ is not an endpoint of $\Gamma_j^{\pm*}$ by property~(i). Let $\wt D_\pm:=B_-(2\delta,\y_{\pm}(\x),\Gamma_j^{\pm*})$. By~\eqref{eqn:rolling disc for extensions}, \eqref{eqn:r0 vs delta}, $\gamma\ge2$ and property~(ii), $\wt D_{\pm}\cap \Gamma_j^{\pm*}=\emptyset$ and so $\wt D:=\wt D_+\cap \wt D_-$ (which contains $\x$) satisfies $\wt D\subset D$. Observe that $\wt D$ is the intersection of two discs of the same radius $2\delta$. We intend to apply Lemma~\ref{lem:grain} to an affine image of $\wt D$. Let $\c_\pm$ be the center of $\wt D_\pm$. We note that $\x$ belongs to each line segment joining $\c_\pm$ and $\y_\pm(\x)$, moreover, $l_\pm:=|\c_\pm-\x|=2\delta-d_\pm^*(\x)\in[\delta,2\delta]$. Due to property~(iii), these two line segments intersect (at $\x$) at an angle $\theta$ which is between $\tfrac\eps3$ and $\pi-\tfrac\eps3$. Let $\theta_\pm$ be the angle opposite to $l_\pm$ in the triangle with the vertices at $\x$, $\c_+$ and $\c_-$. Without loss of generality, we can assume $d_+^*(\x)\ge d_-^*(\x)$.  Then $\theta_-\ge\theta_+$ and $\theta_-+\theta_+=\pi-\theta$. We have
\begin{equation*}
\tilde h:=|\c_--\c_+| = \frac{\sin \theta}{\sin \theta_-}l_- \ge \frac{\sin\tfrac\eps3}{1} \delta.
\end{equation*}
If $\theta_-\ge\tfrac\pi2$, then $\tilde h\le l_-\le 2\delta$. Otherwise,
\begin{equation*}
\tilde h=\frac{\sin \theta}{\sin \theta_-}l_-\le \frac{\sin\theta}{\sin\tfrac{\pi-\theta}2} 2\delta = 2(\sin\tfrac{\theta}{2}) 2\delta \le (\cos\tfrac\eps6) 4\delta.
\end{equation*}
There is an affine transform $T$ which is a composition of appropriate rotation, translation and homothety with ratio $(2\delta)^{-1}$ such that Lemma~\ref{lem:grain} is applicable to $T(\wt D)$. We obtain the required lower bound on $\lambda_n(D,\x)$ in the standard manner using~\eqref{eqn:compare} and~\eqref{eqn:affine}:
\begin{align*}
\lambda_n(D,\x) & \ge\lambda_n(\wt D,\x)= 4\delta^2 \lambda_n(T(\wt D),T(\x)) \\ &\ge c(D) \rho_n^*(d_-^*(\x))\rho_n^*(d_+^*(\x)) \ge c(D) \rho_n^*(d_-(\x))\rho_n^*(d_+(\x)),
\end{align*}
where in the last step we used that $\sin(\tfrac{2\eps}{3}) d_\pm(\x)\le d_\pm^*(\x)$. Indeed, if $ d_\pm(\x)\ne  d_\pm^*(\x)$ for some choice of $\pm$, then by property~(i) $d_\pm(\x)=|\x-\v_j|$ and the claimed inequality follows from properties~(i) and~(iii).

For the upper bound, we will apply Lemma~\ref{lem:lower prod of two basic}. With slight abuse/reintroduction of notations, it will be convenient to use the same notations as for the lower bound now related to different discs and sets. Let $\wt D_\pm:=B_+(\gamma\delta,\y_\pm(\x),\Gamma_j^{\pm*})$. By~\eqref{eqn:rolling disc for extensions} and~\eqref{eqn:r0 vs delta}, $\intr(\wt D_\pm)\cap \Gamma_j^{\pm*}=\emptyset$. Moreover, by property~(iv) $\intr(\wt D_\pm)\cap \Gamma_{j}^{\mp}=\emptyset$. So taking property~(ii) into account, we obtain $\intr(\wt D_-\cup \wt D_+)\cap D=\emptyset$. Let $\c_\pm$ be the center of $\wt D_\pm$. We have (due to $\delta\le \diam(D)=R$) $D\subset 2RB+\c_\pm$, so we will apply Lemma~\ref{lem:lower prod of two basic} to an appropriate affine image of the set 
\begin{equation*}
\wt D:=((2RB+\c_-)\setminus \wt D_-) \cap ((2RB+\c_+)\setminus \wt D_+)
\end{equation*}
containing $D$ and get the proper estimate. Let us first justify the conditions of Lemma~\ref{lem:lower prod of two basic}. Arguing similarly to the lower bound, we let $l_\pm:=|\c_\pm-\x|=\gamma\delta+d_\pm^*(\x)\in[\gamma\delta,(\gamma+1)\delta]$. Due to property~(iii), the lines containing $l_i$ intersect (at $\x$) at an angle $\theta$ which is between $\tfrac\eps3$ and $\pi-\tfrac\eps3$. Let $\theta_\pm$ be the angle opposite to $l_\pm$ in the triangle with the vertices at $\x$, $\c_-$ and $\c_+$, then (recall the assumption $d_+^*(\x)\ge d_-^*(\x)$) $\theta_-\le\theta_+$ and $\theta_-+\theta_+=\pi-\theta$. We have
\begin{equation*}
\tilde h:=|\c_--\c_+| = \frac{\sin \theta}{\sin \theta_+}l_+ \ge \frac{\sin\tfrac\eps3}{1} \gamma\delta.
\end{equation*}
If $\theta_+\ge\tfrac\pi2$, then $\tilde h\le l_+\le (\gamma+1)\delta=\tfrac{\gamma+1}{2\gamma} \cdot 2\gamma\delta$. Otherwise,
\begin{equation*}
\tilde h=\frac{\sin \theta}{\sin \theta_+}l_+\le \frac{\sin\theta}{\sin\tfrac{\pi-\theta}2} (\gamma+1)\delta = 2(\sin\tfrac{\theta}{2}) (\gamma+1)\delta \le \frac{\gamma+1}{\gamma}(\cos\tfrac\eps6) 2\gamma\delta.
\end{equation*}
It remains to choose a large enough $\gamma=\gamma(\eps)$ so that for some $\zeta\in(0,\tfrac12)$ we have $\tfrac{\gamma+1}{\gamma}(\cos\tfrac\eps6)<1-\zeta$, $\sin\tfrac\eps3\ge\zeta$, $\tfrac{\gamma+1}{2\gamma}<1-\zeta$ and $\tfrac{\gamma+1}{\gamma}\le1+\tfrac\zeta2$ (the last condition will ensure that $|\x-\c_\pm|\le(1+\tfrac\zeta2) \gamma\delta$).
Now with appropriate $T$ which is a composition of a rotation and a translation and with $r_1=\gamma\delta$ and $r_2=2R$ by Lemma~\ref{lem:lower prod of two basic}, \eqref{eqn:compare} and~\eqref{eqn:affine}
\begin{align*}
\lambda_n(D,\x) & \le \lambda_n(\wt D,\x)=\lambda_n(T(\wt D),T(\x))\le c(D) \rho_n^*(d_-^*(\x))\rho_n^*(d_+^*(\x))\\ &\le c(D) \rho_n^*(d_-(\x))\rho_n^*(d_+(\x)).
\end{align*}
To establish~\eqref{eqn:main}, observe that by property~(ii) for $j'\ne j$ we have $d(\x,\Gamma_{j'}^\pm)\ge \delta$.
\end{proof}

\begin{remark}\label{rem:cases12}
	The arguments of Case~1 and Case~2 do not use the hypothesis that $0<\alpha_j<\pi$, so under the conditions of Theorem~\ref{thm:main} without this hypothesis, for any $\delta>0$ and any $\x\in D$ such that $d(\x,\{\v_j\}_{j=1}^k)\ge\delta$, we have
	\begin{equation*}
	\lambda_n(D,\x)\approx c(\delta,D) \min_{1\le i\le m} n^{-1}\rho_n^*(d(\x,\Gamma_i)).
	\end{equation*} 
\end{remark}

{\bf Acknowledgment.} We are grateful to the referee for the careful reading of the manuscript and several valuable suggestions that pointed out some inaccuracies and, more importantly, led to an improvement of the generality of the result.

\begin{bibsection}
\begin{biblist}

%

\bib{Bo-Er}{book}{
	author={Borwein, Peter},
	author={Erd\'{e}lyi, Tam\'{a}s},
	title={Polynomials and polynomial inequalities},
	series={Graduate Texts in Mathematics},
	volume={161},
	publisher={Springer-Verlag, New York},
	date={1995},
	pages={x+480},
}

\bib{Co-Da-Le}{article}{
	author={Cohen, Albert},
	author={Davenport, Mark A.},
	author={Leviatan, Dany},
	title={On the stability and accuracy of least squares approximations},
	journal={Found. Comput. Math.},
	volume={13},
	date={2013},
	number={5},
	pages={819--834},
}

\bib{Co-Gi}{article}{
	author={Cohen, Albert},
	author={Migliorati, Giovanni},
	title={Optimal weighted least-squares methods},
	journal={SMAI J. Comput. Math.},
	volume={3},
	date={2017},
	pages={181--203},
}


\bib{Di-Pr}{article}{
   author={Ditzian, Z.},
   author={Prymak, A.},
   title={On Nikol'skii inequalities for domains in $\mathbb{R}^d$},
   journal={Constr. Approx.},
   volume={44},
   date={2016},
   number={1},
   pages={23--51},
}

\bib{Ja}{article}{
	author={Jackson, Dunham},
	title={On the application of Markoff's theorem to problems of
		approximation in the complex domain},
	journal={Bull. Amer. Math. Soc.},
	volume={37},
	date={1931},
	number={12},
	pages={883--890},
}

\bib{Kr}{article}{
	author={Kro{\'o}, Andr{\'a}s},
	title={Christoffel functions on convex and starlike domains in $\mathbb{R}^d$},
	journal={J. Math. Anal. Appl.},
	volume={421},
	date={2015},
	number={1},
	pages={718--729},
}

\bib{Kr15}{article}{
	author={Kro\'{o}, A.},
	title={Multivariate ``needle'' polynomials with application to norming
		sets and cubature formulas},
	journal={Acta Math. Hungar.},
	volume={147},
	date={2015},
	number={1},
	pages={46--72},
}

\bib{Kr16}{article}{
	author={Kro\'{o}, A.},
	title={Multivariate fast decreasing polynomials},
	journal={Acta Math. Hungar.},
	volume={149},
	date={2016},
	number={1},
	pages={101--119},
}

%

\bib{Lu}{article}{
	author={Lubinsky, Doron S.},
	title={A new approach to universality limits involving orthogonal
		polynomials},
	journal={Ann. of Math. (2)},
	volume={170},
	date={2009},
	number={2},
	pages={915--939},
}


\bib{Ne}{article}{
	author={Nevai, Paul},
	title={G\'{e}za Freud, orthogonal polynomials and Christoffel functions. A
		case study},
	journal={J. Approx. Theory},
	volume={48},
	date={1986},
	number={1},
	pages={3--167},
}

\bib{Pa}{article}{
	author={Pastur, L. A.},
	title={Spectral and probabilistic aspects of matrix models},
	conference={
		title={Algebraic and geometric methods in mathematical physics},
		address={Kaciveli},
		date={1993},
	},
	book={
		series={Math. Phys. Stud.},
		volume={19},
		publisher={Kluwer Acad. Publ., Dordrecht},
	},
	date={1996},
	pages={207--242},
}

\bib{Pr}{article}{
	author={Prymak, A.},
	title={Upper estimates of Christoffel function on convex domains},
	journal={J. Math. Anal. Appl.},
	volume={455},
	date={2017},
	number={2},
	pages={1984--2000},
}

\bib{Pr-U1}{article}{
	author={Prymak, A.},
	author={Usoltseva, O.}
	title={Pointwise behavior of Christoffel function on planar convex domains},
	conference={title={accepted in ``Topics in classical and modern analysis'', in memory of Yingkang Hu}, series={Applied and Numerical Harmonic Analysis}},
	book={publisher={Birkh\"auser}},
	eprint={arXiv:math.CA/1709.10509},
}

\bib{Si}{article}{
	author={Simon, Barry},
	title={The Christoffel-Darboux kernel},
	conference={
		title={Perspectives in partial differential equations, harmonic
			analysis and applications},
	},
	book={
		series={Proc. Sympos. Pure Math.},
		volume={79},
		publisher={Amer. Math. Soc., Providence, RI},
	},
	date={2008},
	pages={295--335},
}





%

\bib{Vi}{article}{
	author={Videnskii, V. S.},
	title={Extremal estimates for the derivative of a trigonometric
		polynomial on an interval shorter than its period},
	journal={Soviet Math. Dokl.},
	volume={1},
	date={1960},
	pages={5--8},
}

\bib{Wa}{article}{
	author={Walther, G.},
	title={On a generalization of Blaschke's rolling theorem and the
		smoothing of surfaces},
	journal={Math. Methods Appl. Sci.},
	volume={22},
	date={1999},
	number={4},
	pages={301--316},
}

\bib{Xu}{article}{
   author={Xu, Yuan},
   title={Asymptotics for orthogonal polynomials and Christoffel functions
   on a ball},
   journal={Methods Appl. Anal.},
   volume={3},
   date={1996},
   number={2},
   pages={257--272},
}

\end{biblist}
\end{bibsection}

\end{document}